\newcommand{\Fref}[1]{Fig.~\ref{#1}}
\newcommand{\Tref}[1]{Table~\ref{#1}}
\newtheorem{proposition}{Proposition}
\newtheorem{remark}{Remark}
\title{Flow Measurement: An Inverse Problem Formulation }
\author{Jiwei Li\\
	Yau Mathematical Sciences Center\\
	Tsinghua University\\
	Beijing 100084, China\\
	\href{mailto:li-jw20@mails.tsinghua.edu.cn}{li-jw20@mails.tsinghua.edu.cn}\\
	\And Lingyun Qiu\\
	Yau Mathematical Sciences Center\\
	Tsinghua University\\
	Beijing 100084, China\\
	\href{mailto:lyqiu@tsinghua.edu.cn}{lyqiu@tsinghua.edu.cn}\\
	\And Zhongjing Wang\\
	Department of Hydraulic Engineering\\
	Tsinghua University\\
	Beijing 100084, China\\
	\href{mailto:zj.wang@tsinghua.edu.cn}{zj.wang@tsinghua.edu.cn}
	\And Hui Yu\\
	Yau Mathematical Sciences Center\\
	Tsinghua University\\
	Beijing 100084, China\\
	\href{mailto:huiyu@tsinghua.edu.cn}{huiyu@tsinghua.edu.cn}\\}
\begin{document}
	
	\maketitle
	
	\begin{abstract}
		This paper proposes a new mathematical formulation for flow measurement based on the inverse source problem for wave equations with partial boundary measurement. Inspired by the design of acoustic Doppler current profilers (ADCPs), we formulate an inverse source problem that can recover the flow field from the observation data on a few boundary receivers. To our knowledge, this is the first mathematical model of flow measurement using partial differential equations. This model is proved well-posed, and the corresponding algorithm is derived to compute the velocity field efficiently. Extensive numerical simulations are performed to demonstrate the accuracy and robustness of our model. Our formulation is capable of simulating a variety of practical measurement scenarios.
	\end{abstract}
	
	\keywords{Inverse source problems \and wave equations \and flow measurement}
	
	
	\section{Introduction\label{sec:intro}}
	Flow measurement is the process of measuring the dynamics of flow in vessels, pipelines or open channels. Specifically, flow measurement involves determining the mass flow rate, the volumetric flow rate, the flow velocity, and the Reynolds tensor, among which flow velocity is the significant and primary concern. It is a key technology for experimental fluid dynamics and has been studied extensively. Flow measurement has a wide range of applications in either industrial processes or daily life, including the nuclear reactor fluid flow measurement, the control of fuel flow in engine management systems, the regulation of drug delivery in ventilators, the measurement of corrosive materials in chemical industries and balance management in the water supply network\cite{tongAnalysisFlowDistribution2021,termaatFluidFlowMeasurements1970,tudosieAircraftGasTurbineEngine2011,shademaniActiveRegulationOnDemand2017,beraFlowMeasurementTechnique2012,klingelReviewWaterBalance2015}.
	
	Despite its great practical importance, no fully satisfactory flow measurement theory has yet been developed. Over the past decades, many techniques have been developed to measure the flow velocity, such as Pitot tubes, hot wire film anemometer (HWFA), ultrasonic flowmeters (such as transit time flowmeters and Doppler flowmeters), and magnetic flowmeters\cite{liptakInstrumentEngineersHandbook2003c,blincoTurbulenceCharacteristicsFree1971b,nakagawaPredictionContributionsReynolds1977}. However, these methods still have significant limitations. We mention some typical ones here, according to the authors' best knowledge.
	\begin{itemize}
		\item The major limitations of Pitot tubes are poor accuracy, low rangeability, sensitivity to flow direction disturbances, the requirement for high-velocity flow, and its inability to be used with viscous or dirty fluids.
		\item The accuracy of the HWFA is easily affected by the ambient temperature, which is the main source of errors. Besides, HWFA is a contact method and interferes the original flow field greatly.
		\item Transit time flowmeters suffer from pipe-wall interference, causing accuracy problems. They also cannot be used on dirty fluids with bubbles. Moreover, these techniques cannot perform the instantaneous multipoint measurement.
		\item Although the Doppler flowmeter has the advantage of its non-invasive design, it cannot be applied with clear fluid. Another main limitation is the assumption that the flow field over the measurement volume is homogeneous.  
	\end{itemize}
	
	This paper aims to provide a mathematical model based on partial differential equations (PDEs) for measuring flow using ultrasound. We first recast the flow measurement based on the Doppler frequency shift effect as an inverse source problem for the wave equation. Then, we obtain the instantaneous velocity of particles, i.e. velocity field, in a large region by solving the associated optimization problem. Considering measuring methods based on PDEs looks pretty appealing, especially in contexts when the flow is complex. Indeed, all previous methodologies leverage a homogeneity idea, while the PDE approach involves individual tracking of natural mark points in the flow. Thus, it can provide high-resolution velocity measurement for complex flows, such as vortices, tidal flows, and complex intersections at stream junctions as shown in \Fref{junction}.
	\begin{figure}
		\centering
		\includegraphics[width=0.5\textwidth]{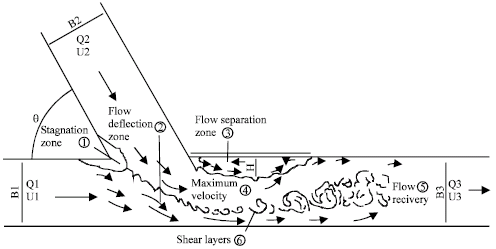}
		\caption{Observed zone at the junction of two streams\cite{bestFlowDynamicsRiver1987}.}
		\label{junction}
	\end{figure}
	
	The inverse source problem is an important subject, and has various significant applications which have been extensively studied, such as radar imaging, pollution detection, magnetoencephalography, photoacoustic tomography/thermoacoustic tomography (PAT/TAT), and through-the-wall imaging\cite{cheneyFundamentalsRadarImaging2009,andrleInverseSourceProblem2015,linTheoreticalNumericalStudies2022,ammariInverseSourceProblem2002,liThroughWallDetectionHuman2012,kuchmentMathematicsPhotoacousticThermoacoustic2015,burgholzerExactApproximativeImaging2007,zangerlFullFieldInversion2019,ellerMicrolocallyAccurateSolution2020,hristovaReconstructionTimeReversal2008}. This class of inverse problems aims to reconstruct the source term (or the inhomogeneous term), including locations and magnitudes of sources, from data usually measured on the boundary. It is expected that such measurements can be used to detect anomalies in the medium since the moving impurities in the fluid can be considered as environmental abnormalities, causing the scattering of the ultrasonic waves. Therefore, the inverse source problem provides a suitable formulation for the flow measurement method with ultrasonic waves.
	
	
	
	
	Flexible selections of parameter settings in our novel model allow for testing and verification of various configurations of instruments. We propose to test several typical scenarios closely related to practical measurements in this paper. These examples will demonstrate the performances of our formulation when varying the dominant frequency of the source wave, modifying the layouts of receivers, and adding noise to registered data. Indeed, our model can be applied to more scenarios. 
	
	
	This paper is organized as follows. In section \ref{sec:model}, several common measuring instruments and their measuring principles are reviewed. We propose a mathematical formulation for flow measurement in the form of an inverse source problem in section \ref{sec:directInv}. Section \ref{sec:numer results} consists of the numerical method to solve the inverse problem and several numerical examples in various experimental settings, demonstrating the high accuracy of our new model. Finally, in section \ref{sec:conclusion} some conclusions and an outlook on future work are given.
	
	\section{Mathematical model for the flow measurement\label{sec:model}}
	\subsection{Physical process of the flow measurement}
	At the beginning of this section, we review some typical instruments for flow measurement. The well-known Pitot tube, invented by a French engineer, Henri Pitot in 1732, was widely used to estimate the flow speed based on Bernoulli's equation. However, the result is of low accuracy, and this method can only obtain the velocity measurement at the Pitot tube entrance.
	
	
	
	Another primary class of instruments in the flow measurement is based on the changes in wave properties when ultrasonic waves propagate in the flow, such as the wave velocity, the wave phase, and the wave frequency; see \Fref{diff} which shows how the wave propagation speed changes from the transmitters $T$ to the receivers $R$. 
	
	For example, the flow velocity is approximated by $c^2/(2L) \cdot \Delta t$	when the change in wave velocity captured by the receiver is taken into account.
	Here, $\Delta t$ is the time difference for the wave to propagate upstream and downstream, $c\approx 1500~\mathrm{m/s}$ is the velocity of ultrasonic sound in still water, and $L$ is the distance between the wave transmitter and receiver. 
	It should be noted that the large constant $c^2/(2L)$ can magnify the measurement error of $\Delta t$ significantly. 
	Therefore this method requires high accuracy of measuring circuit in order to obtain a good approximation on $u$. Usually, the error of $\Delta t$ must be less than $0.01~\mathrm{\mu s}$ for the result to reach the accuracy of $1\%$. In addition, since $c$ varies in different flows, choosing $c$ as a constant in the calculation formula above also causes errors. Moreover, only averaged velocity can be obtained by these methods.
	
	\begin{figure}
		\centering
		\begin{minipage}{0.48\textwidth}
			\flushright
			\subfigure[Measurement instruments based on the changes in wave properties]{\includegraphics[width=0.6\columnwidth]{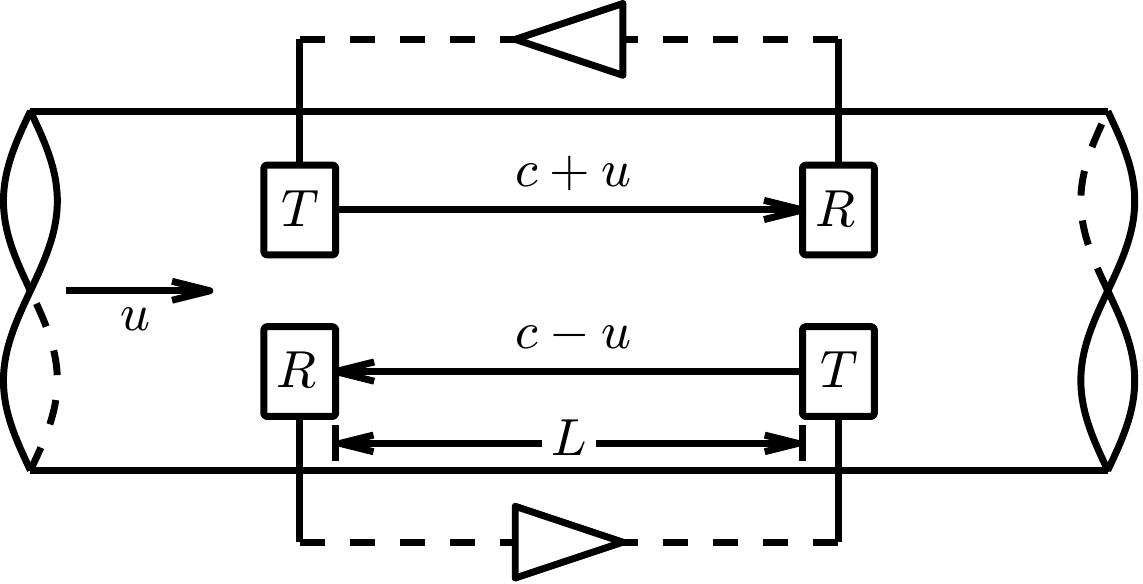}
				\label{diff}}
		\end{minipage}
		\begin{minipage}{0.48\textwidth}
			\flushleft
			\subfigure[Acoustic Doppler current profiler \cite{sellarHighresolutionVelocimetryEnergetic2015}]{\includegraphics[width=0.6\columnwidth]{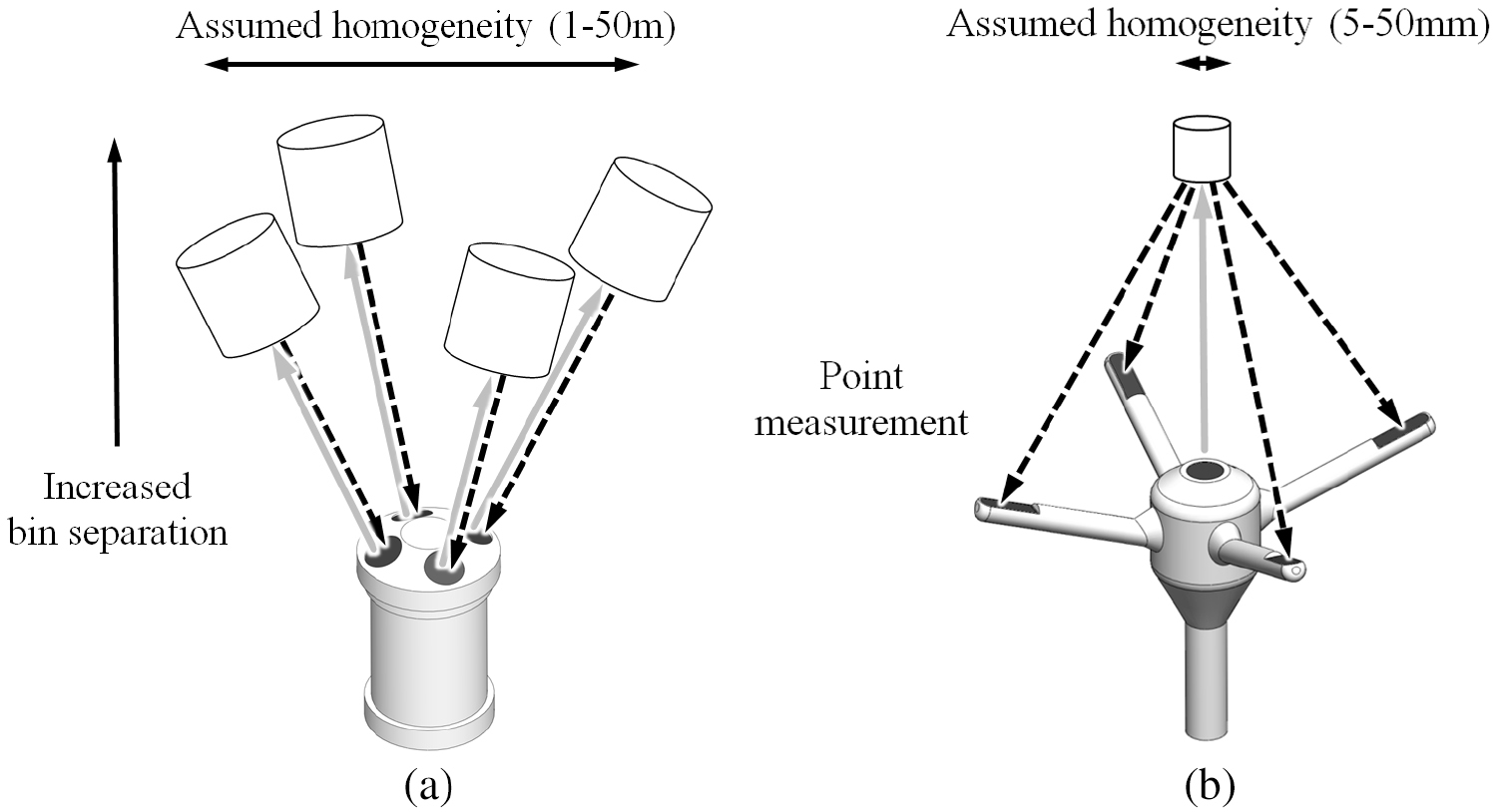}
				\label{adcp}}
		\end{minipage}
		
		\caption{Measurement instruments}
	\end{figure}
	
	
	
	The acoustic Doppler velocimeter \big(ADV in the right figure of \Fref{adcp}; see \cite{lohrmannAcousticDopplerVelocimeterADV1994,lohrmannDirectMeasurementsReynolds1995}\big) is a device to record instantaneous velocity components at a single point with a relatively high-frequency ultrasonic sound by applying the Doppler frequency shift effect. Velocity measurements are performed by measuring particle velocity in a relatively small sampling volume located 5 or 10 cm from ADV. However, it is unsuitable for measuring the overall velocity distribution in a large environment.
	
	The acoustic Doppler current profiler \big(ADCP in the left figure of \Fref{adcp}; see \cite{stoneEvaluatingVelocityMeasurement2007,tokyayInvestigationTwoElemental2009,whippleMeasurementsReynoldsStress2006,staceyObservationsTurbulencePartially1999}\big) is widely used in various environments, especially in oceans, rivers, canals and streams. 
	ADCP uses shifted frequency of echoes scattered by the particles in sampling volumes to calculate the projected velocities of the flow along the direction of the beams. There are several main limitations of this technique. 
	Firstly, ADCP does not work well without the assumption that the flow field over the measured volume is homogeneous. So its validity is likely to fail in practical flow conditions. Secondly, the high-resolution measurement far from ADCP cannot realize due to the averaging over the large sampling volumes.
	
	
	The convergent-beam acoustic Doppler current profiler (C-ADCP, \cite{sellarHighresolutionVelocimetryEnergetic2015}) developed by Sellar et al. \cite{sellarHighresolutionVelocimetryEnergetic2015} is able to overcome the constraint on the short distance between the sampling volume and the advice, and the drawback due to the diverging beams of ADCP. 
	More available receivers and more complex configurations enable C-ADCP's focal measurements to significantly reduce Doppler noise and effectively improve the quality of measured data.
	
	
	
	We first investigate the physical process of the device ADCP, which enjoys a relatively high flow measurement accuracy in a sampling volume; see \Fref{phyProc}. ADCP contains several transducers to transmit and receive ultrasonic sound waves. Without loss of generality, we analyze the physical process for the case of a single transducer rather than the typical 4-beam configuration. 
	Then we explain how ADCP works based on the configuration shown in \Fref{phyProc}. 
	ADCP assumes that the density of particles is close to the density of the water flow; accordingly, the velocity of the particles in the flow equals the flow velocity. Hence they perform well in the flow with particles such as sand, gas bubbles, mist, or other impurities that can scatter the ultrasonic wave. 
	Each transducer acts as both transmitter and receiver of ultrasonic waves. Transducers emit a pulsed acoustic signal with a specific frequency along a directed beam. Then the acoustic waves are scattered by the moving particles and propagate back to the surrounding flow. Finally, these echoes are received by the corresponding transducer. 
	Thanks to the Doppler shift effect, the frequency shift of the echoes is proportional to the flow velocity along the acoustic beam.
	We calculate the projected velocity along the beam using the frequency difference of the emitted waves and echoes according to the Doppler frequency shift formula.
	More precisely, since the sound speed $c$ is much larger than the projected flow speed $\hat{u}$ along the beam, the Doppler frequency shift along each beam is approximately given by
	\begin{equation}
		\Delta q\approx \frac{2\hat{u}\sin\phi}{c}q_s,
		\label{dopplerinadcp}
	\end{equation}
	where $\phi$ is the angle between the beam and the vertical direction. However, the algebraic formula \eqref{dopplerinadcp} is oversimplified, resulting in a low-accuracy flow velocity measurement with ADCP.
	Furthermore, we can only obtain the averaged velocity of a sampling volume in the flow, and it cannot capture the velocity field pointwise or far from the instruments. In other words, it cannot simultaneously obtain velocities at multiple locations in the flow during a single measurement process.
	
	\subsection{Reformulation of the flow measurement}\label{SecReform}
	
	We present a novel mathematical model that enables us to overcome the limitations of low spatial resolution, small measurement region, and lack of instantaneous multi-point measurement. 
	Assume that the transmitter and receiver do not have to be deployed in the same location. One can decompose the measurement into two processes; see \Fref{phyProc}.
	
	\begin{figure}
		\centering
		\begin{minipage}{0.48\textwidth}
			\flushright
			\subfigure[Process 1]{
				\includegraphics[width=0.6\textwidth]{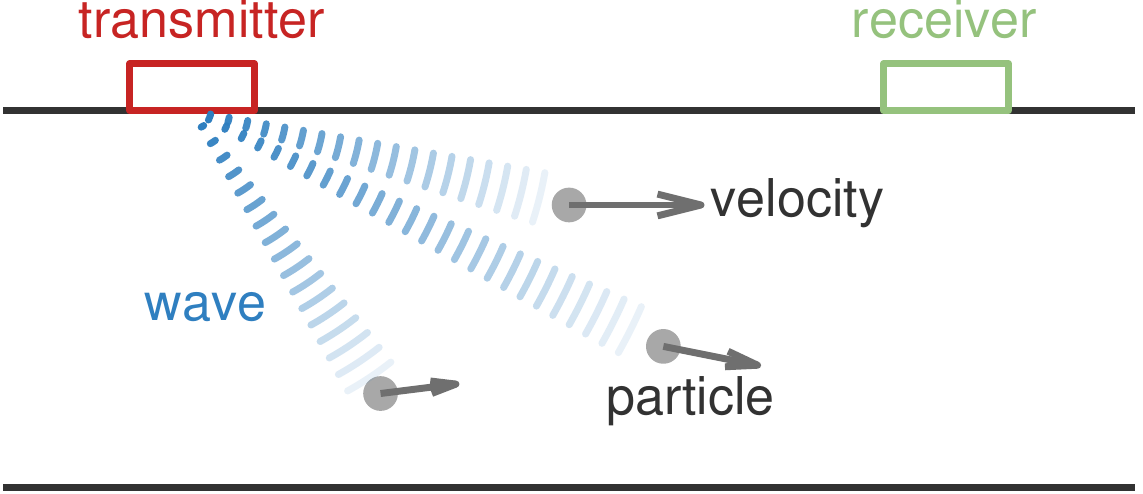}
				\label{subProc1}}
		\end{minipage}
		\begin{minipage}{0.48\textwidth}
			\flushleft
			\subfigure[Process 2]{
				\includegraphics[width=0.6\textwidth]{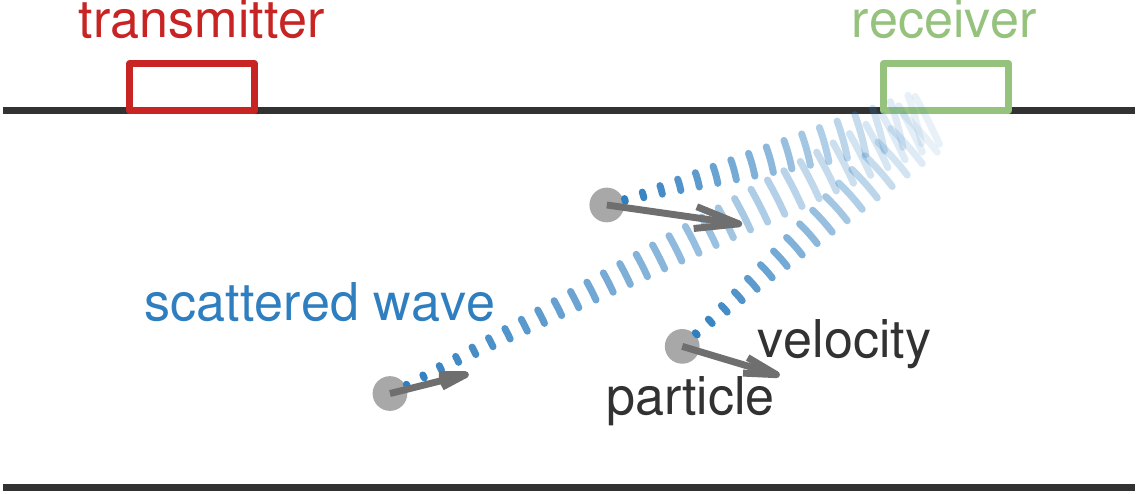}
				\label{subProc2}}
		\end{minipage}
		\caption{The whole physical process of detection with ADCP. (a) The acoustic source wave is emitted by the transmitter and received by the moving particles in the flow during an extremely short period of time. (b) The acoustic wave scattered by the moving particles induces the response of the receivers.
		}
		\label{phyProc}
	\end{figure}
	
	Process 1 in \Fref{subProc1} is to emit the ultrasonic waves from the stationary transmitter \big(the red box in \Fref{subProc1}\big). The inhomogeneous wave equation governs it with constant coefficients and a single stationary transmitter. Then the ultrasonic sound wave is sensed by the moving particles \big(the gray disks in \Fref{subProc1}\big) in the flow field.
	
	Process 2 is crucial to determine the velocity of particles. The moving particles scatter the ultrasonic waves emitted by the transmitter. It is appropriate to think of each particle as a source that emits sound waves uniformly towards the flow in all directions . 
	Then the receiver (the green boxes in \Fref{phyProc}) captures the echoes by measuring the sound wave pressure. Therefore, Process 2 can be viewed as detecting moving acoustic wave sources, i.e., the particles, from data collected on receivers and then determining the velocity of the particles and the flow. 
	However, in practice, it is only possible to detect the particle positions at discrete time points. 
	Since the sound wave speed is much larger than the particle speed, we can reasonably assume that a piecewise constant function can approximate the particle trajectory well. In other words, the particles stay at the same positions between the emitting time and the receiving time of ultrasonic waves. 
	
	To summarize, the original flow measurement problem is recast as an inverse source problem, which consists of two steps:
	\begin{enumerate}[(i)]
		\item Detect the positions of the scattering particles in the flow at each time point from the acoustic wave pressure recorded on receivers.
		\item Determine the moving trajectory of the particles to obtain the particle velocity, i.e., the flow velocity. 
	\end{enumerate} 
	To the best of our knowledge, this modeling strategy is revolutionary, and the numerical simulations demonstrate its high accuracy and efficiency over existing methods.
	
	\section{Direct and inverse source problems for the wave equations\label{sec:directInv}}
	\subsection{Mathematical model of the flow measurement}
	We begin with the inhomogeneous wave equation governing the direct scattering model with moving sources. The sound speed in the flow is denoted by the constant $c$ as before. Let $f_0(x)$ be the initial density of particles. The map $T_t:\mathbb{R}^n\rightarrow \mathbb{R}^n(t\geq 0,n=2,3)$ represents the moving trajectory of particles, meaning that the particle at $x$ is carried to the image $T_t(x)$ by time $t$ under the influence of the flow. Assume that the map $T_t$ is bijective and satisfies $T_0(x)=x$.
	
	Next, let us introduce the definition of push-forward. Let $f_t(x):=(T_t)_\sharp f_0(x)$ be the density of particles at time $t$ where the image density $f_t(x)$ satisfies that for any measurable set $A\subset\mathbb{R}^n$,
	\begin{equation}
		\int_A f_t(x)\mathrm{d}x=\int_{T_t^{-1}(A)} f_0(x)\mathrm{d}x,
	\end{equation}
	i.e., the mass conservation of particles.
	In addition, suppose that the support of $f_t(\cdot)$ is compact in $\Omega\subset \mathbb{R}^n$, a simply connected bounded domain with the Lipschitz boundary, which represents the motion area for the moving particles during the time interval of interests. 
	
	Push-forward ensures that the map $(T_t)_\sharp$ between $f_0$ and $f_t$ is mass-preserving, which means that $f_t(x)$ is also a density function. Hence it is a suitable tool to describe the particle movement in the flow. Then, the particle velocity, representing the flow velocity, at point $x$ and time $t$ is given by $\partial_t T_t(x)$.
	
	For the general distribution of particles in the flow, the sound pressure $U(x,t)$ satisfies the following inhomogeneous wave equation
	\begin{subequations}\label{oriWaveEqns}
		\begin{align}
			&\frac{1}{c^2}\partial^2_t U(x,t)-\Delta U(x,t)=\lambda(x,t)f_t(x),~(x,t)\in\mathbb{R}^n\times \mathbb{R}^+,\label{oriWave}\\
			&U(x,0)=\partial_t U(x,0)=0,~x\in\mathbb{R}^n,\label{zeroIC}
		\end{align}
	\end{subequations}
	where $\lambda(x,t)$ denotes the known sound pressure of the ultrasonic wave emitted by the transmitter in the flow field and $\lambda(x,t)\equiv 0$ for $t\leq 0$.

	
	As explained in section \ref{SecReform}, the sound speed $c$ is much larger than the flow speed $|\partial_t T_t(x)|$, and the particles are almost stationary between the emission and capture of the ultrasonic waves. Let $\{T_j\}$ be a uniform partition with time step $\Delta T$ such that $T_j=j\Delta T, j=0,1,\dots$, and denote the time points for the detection of ultrasonic waves. We introduce a piecewise constant approximation to $f_t(x)$ such that in every small time interval $[T_{j-1},T_j]$, we have
	\begin{equation}
		f_t(x)\approx f_j(x):=f_{T_j}(x),~t\in[T_{j-1},T_j].
	\end{equation}
	Suppose that the ultrasonic wave in previous detection has no impact on the current detection, which means $U(x,T_{j-1})=\partial_t U(x,T_{j-1})=0$ during each detection interval $[T_{j-1},T_j]$. 
	The dynamics of the ultrasonic wave pressure are governed by the inhomogeneous wave equation below: for $j = 1,\ldots$,
	\begin{subequations}\label{pdepiecewise}
		\begin{align}
			& \frac{1}{c^2}\partial^2_t U(x,t)-\Delta U(x,t)=\lambda(x,t)f_{T_j}(x),~(x,t)\in\mathbb{R}^n\times (T_{j-1},T_j],\label{pdepiecewise_a}\\
			&U(x,T_{j-1})=\partial_tU(x,T_{j-1})=0,~x\in\mathbb{R}^n.\label{pdepiecewise_b}
		\end{align}
	\end{subequations}
	Then we can reduce \eqref{pdepiecewise} over each time step to an identical form in a reference time interval $[0,T]$:
	\begin{subequations}\label{appWave_sys}
		\begin{align}
			&\frac{1}{c^2}\partial^2_t U(x,t)-\Delta U(x,t)=\lambda(x,t)f(x),~(x,t)\in\mathbb{R}^n\times (0,T],\label{appWave}\\
			&U(x,0)=\partial_t U(x,0)=0,~x\in\mathbb{R}^n,\label{initCond}
		\end{align}
	\end{subequations}
	where $f(x)$ is the distribution of the stationary particles in $[0,T]$.
	
	Suppose that $\Gamma\subset\partial\Omega$ is a set used for data collection. $\Gamma$ is regarded as a surface in modeling, while in numerical tests, we are interested in the case where $\Gamma$ is a set of discrete points to place the receivers, i.e., $\Gamma=\{r_1,r_2,\dots,r_N\}\subset\partial\Omega$.
	The following proposition shows that the approximation \eqref{appWave_sys} above is proper. For the sake of simplicity, the proposition is restricted to the three-dimensional case, i.e., $n=3$.
	\begin{proposition}
		Assume that $f_t(x)\in C^1(\Omega\times[0,T])$ and $T_t(x)\in C^2(\Omega\times[0,T])$. Let $U_1(x,t),U_2(x,t)$ be the solutions of \eqref{oriWaveEqns} and \eqref{appWave_sys}
		respectively. Then $U_1(x,t),U_2(x,t)$ are sufficiently close provided that $T$ is small enough, namely for any $\epsilon>0$, there exists $\delta>0$ such that for every $0<T<\delta$, we have
		\begin{equation*}
			\|U_1-U_2\|_{L^1(\Gamma\times[0,T])}\leq\epsilon.
		\end{equation*}
	\end{proposition}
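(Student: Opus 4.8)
The plan is to reduce the whole comparison to the explicit Kirchhoff (retarded-potential) representation for the three-dimensional inhomogeneous wave equation with vanishing Cauchy data. Rewriting the common equation as $\partial_t^2 V-c^2\Delta V=c^2\lambda g$ with zero initial data, the solution is
\begin{equation*}
	V(x,t)=\frac{1}{4\pi}\int_{|x-y|\le ct}\frac{\lambda\bigl(y,\,t-|x-y|/c\bigr)\,g\bigl(y,\,t-|x-y|/c\bigr)}{|x-y|}\,\mathrm{d}y .
\end{equation*}
Applying this with $g=f_t$ (which gives $U_1$) and with $g=f$, the stationary snapshot of the density (which gives $U_2$; one may take $f=f_0$, the precise choice being immaterial), and using that both solutions carry the same $\lambda$ and the same zero data from \eqref{oriWaveEqns} and \eqref{appWave_sys}, subtraction yields the clean formula
\begin{equation*}
	(U_1-U_2)(x,t)=\frac{1}{4\pi}\int_{|x-y|\le ct}\frac{\lambda(y,\tau)\,\bigl[f_\tau(y)-f(y)\bigr]}{|x-y|}\,\mathrm{d}y,\qquad \tau:=t-\frac{|x-y|}{c}.
\end{equation*}
Everything then comes down to bounding this single weakly singular integral on $\Gamma\times[0,T]$.

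Three elementary estimates control the integrand. First, since $f_t\in C^1(\Omega\times[0,T])$ and $f$ is a time slice of it, the mean value theorem gives $\sup_{t\in[0,T]}\|f_t-f\|_{L^\infty(\Omega)}\le T\,\|\partial_t f_t\|_{L^\infty(\Omega\times[0,T])}$. Second, $\lambda$ is the known emitted pressure, hence bounded, $|\lambda|\le\|\lambda\|_{L^\infty}$. Third, because $\mathrm{supp}\,f_t\cup\mathrm{supp}\,f\subset\overline\Omega$ with $\Omega$ bounded, the Newtonian-type integral $\int_{\overline\Omega}|x-y|^{-1}\,\mathrm{d}y$ is finite and bounded by a constant $C_\Omega$ uniformly for $x$ in a neighbourhood of $\overline\Omega$, in particular for $x\in\Gamma\subset\partial\Omega$; the $|x-y|^{-1}$ singularity is harmless since it is locally integrable in $\mathbb{R}^3$. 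Combining the three bounds,
\begin{equation*}
	\sup_{(x,t)\in\Gamma\times[0,T]}|U_1-U_2|(x,t)\;\le\;\frac{C_\Omega}{4\pi}\,\|\lambda\|_{L^\infty}\,\|\partial_t f_t\|_{L^\infty(\Omega\times[0,T])}\,T\;=:\;C\,T .
\end{equation*}
Here the hypothesis $T_t\in C^2(\Omega\times[0,T])$ enters only to guarantee, through the change-of-variables formula for the push-forward, that $f_t$ is genuinely $C^1$ and that the velocity $\partial_t T_t$ stays bounded, so that $C<\infty$; if one prefers, $f_t\in C^1$ may simply be taken as given.

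Integrating the pointwise bound over $\Gamma\times[0,T]$ then gives
\begin{equation*}
	\|U_1-U_2\|_{L^1(\Gamma\times[0,T])}\;\le\;|\Gamma|\,T\,\sup_{\Gamma\times[0,T]}|U_1-U_2|\;\le\;C\,|\Gamma|\,T^{2},
\end{equation*}
where $|\Gamma|$ is the surface measure of $\Gamma$ (or, when $\Gamma=\{r_1,\dots,r_N\}$ is discrete, the cardinality $N$ with the counting measure), a fixed finite constant in either case. Thus the left side tends to $0$ as $T\to0$, and given $\epsilon>0$ it suffices to take any $\delta$ with $C|\Gamma|\delta^2\le\epsilon$. (Using instead the sharp estimate $\int_{|x-y|\le ct}|x-y|^{-1}\,\mathrm{d}y=2\pi c^2t^2$ one even gets a higher power of $T$, but this refinement is not needed.)

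The only genuinely delicate step is the first one: making the retarded-potential representation rigorous. One must either invoke enough regularity of the source $\lambda f_t$ for the classical Kirchhoff formula, or, more robustly, \emph{define} $U_1$ and $U_2$ by the representation formula as mild solutions and verify they solve \eqref{oriWaveEqns} and \eqref{appWave_sys} in the distributional sense. Once the representation is available, every remaining estimate is a routine bound on a weakly singular integral over a fixed bounded set, with all constants depending only on $\Omega$, $\|\lambda\|_{L^\infty}$ and $\|f\|_{C^1}$.
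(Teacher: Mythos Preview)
Your proof is correct and follows the same overall strategy as the paper --- use the three-dimensional Kirchhoff/retarded-potential representation for both $U_1$ and $U_2$, subtract, and bound the resulting weakly singular integral --- but the estimates you use along the way are somewhat different and in fact cleaner. The paper controls the kernel $|x-y|^{-1}$ by invoking a positive distance $m=\inf_{x\in\Gamma,\,y\in\mathrm{supp}\,f}\|x-y\|>0$ and then bounds $|f_s(y)-f(y)|$ through the push-forward change-of-variables formula, explicitly expanding the Jacobian $\det\nabla T_s$; this is where the hypothesis $T_t\in C^2$ is genuinely used, and it ultimately yields the sharper rate $O(T^4)$. You instead absorb the $|x-y|^{-1}$ singularity directly via its local integrability in $\mathbb{R}^3$ (so no separation assumption between $\Gamma$ and $\mathrm{supp}\,f$ is needed), and bound $|f_\tau-f|$ by the mean value theorem using only $f_t\in C^1$, obtaining $O(T^2)$. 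Your route is more elementary and more robust with respect to where $\Gamma$ sits relative to the support of $f$; the paper's route makes fuller use of the transport structure $T_t$ and gives a better power of $T$, though as you note this refinement is irrelevant for the stated conclusion.
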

	\begin{proof}
		$U_1(x,t)$ and $U_2(x,t)$ can be represented by utilizing the Green's function,
		\begin{equation*}
			U_1(x,t)=\int_0^T\int_{\mathbb{R}^3}G_3(x,t;y,s)\lambda(y,s)f_s(y)\mathrm{d}y\mathrm{d}s,
		\end{equation*}
		\begin{equation*}
			U_2(x,t)=\int_0^T\int_{\mathbb{R}^3}G_3(x,t;y,s)\lambda(y,s)f(y)\mathrm{d}y\mathrm{d}s,
		\end{equation*}
		where $G_3(x,t;y,s)$ is the Green's function associated with the wave operator $\square :=\partial_t^2/c^2-\Delta$ in $\mathbb{R}^3$, i.e.
		\begin{equation*}
			G_3(x,t;y,s)=\frac{\delta(t-s-c^{-1}\|x-y\|)}{4\pi\|x-y\|}.
		\end{equation*}
		Let $m=\inf_{x\in\Gamma,y\in\Omega}\|x-y\|>0$. Then it follows from $\lambda(y,t)\in C^0(\Omega\times[0,T])$ that
		\begin{align}
			|U_1(x,t)-U_2(x,t)|&=\left|\int_{B(x,ct)}\frac{\lambda(x,t-c^{-1}\|x-y\|)}{4\pi\|x-y\|}f_{t-c^{-1}\|x-y\|}(y)-f(y)\mathrm{d}y\right|\notag\\
			&\leq \frac{1}{4\pi m}\|\lambda\|_0\int_{B(x,ct)}|f_{t-c^{-1}\|x-y\|}(y)-f(y)|\mathrm{d}y,\notag
		\end{align}
		where $\|\cdot\|_k$ denotes the norm in $C^k(\Omega\times[0,T])$ and $B(x,r)=\{y\in\mathbb{R}^3,\|y-x\|\leq r\}$. For $s\in[0,T]$,
		\begin{align}
			|f_s(y)-f(y)|&=\big|f_s(y)-|\det\nabla T_s(y)|f_s(T_s(y))\big|\notag\\
			&\leq\big|f_s(y)-f_s(T_s(y))\big|+\big|f_s(T_s(y))\big|\cdot\big|1-|\det\nabla T_s(y)|\big|\notag\\
			&\leq s\|\nabla f_s\|_0\|\partial_s T_s\|_0+s\|f_s\|_0|\partial_s (\det \nabla T_\tau(y))|\notag\\
			&\leq s\|\nabla f_s\|_0\|\partial_s T_s\|_0+18s\|f_s\|_0\|\partial_s \nabla T_s\|_0 \|\nabla T_s\|^2_0\notag\\
			&\leq s\cdot 18\|f_s\|_1\max\{\|T_s\|_1,\|T_s\|^3_2\}.\notag
		\end{align} 
		Let 
		\begin{equation*}
			C_0=18\|f_s\|_1\max\{\|T_s\|_1,\|T_s\|^3_2\},~\delta=\left(\frac{\epsilon}{\frac{C_0}{16 m}\|\lambda\|_0 H(\Gamma)c^2}\right)^{\frac{1}{4}}
		\end{equation*}
		where $H(A)$ is the Hausdorff measure of $A$. Hence, for $T<\delta$,
		
		\begin{align}
			\|U_1-U_2\|_{L^1(\Gamma\times[0,T])}&=\int_0^T\int_\Gamma |U_1(x,t)-U_2(x,t)|\mathrm{d}x\mathrm{d}t\notag\\
			&\leq \frac{1}{4\pi m}\|\lambda\|_0\int_0^T\int_\Gamma\int_{B(x,ct)}\left|f_{t-c^{-1}\|x-y\|}(y)-f(y)\right|\mathrm{d}y\mathrm{d}x\mathrm{d}t\notag\\
			&\leq \frac{C_0}{4\pi m}\|\lambda\|_0 \int_0^T\int_\Gamma\int_{B(x,ct)} \left(t-c^{-1}\|x-y\|\right)\mathrm{d}y\mathrm{d}x\mathrm{d}t\notag\\
			&\leq \frac{C_0}{4\pi m}\|\lambda\|_0 \int_0^T\int_\Gamma\int_{B(x,ct)} t\mathrm{d}y\mathrm{d}x\mathrm{d}t\notag\\
			&= \frac{C_0}{16 m}\|\lambda\|_0 H(\Gamma)c^2T^4\notag\\
			&\leq \epsilon.\notag
		\end{align}
		This completes the proof.
	\end{proof}
	

	\subsection{Direct source problems for the wave equations}
	Note that the system \eqref{appWave_sys} is an inhomogeneous wave equation with the separable source term.
	The direct source problem for the wave equations can be described as following.
	
	{\it{\textbf{The direct source problem for the flow measurement:}
			
			Given the density distribution $f(x)$ of particles, one can obtain the measurement data of the sound pressure at the positions of the receivers, denoted by $U(x,t)|_{\Gamma\times (0,T]}$.}} 
	
	We introduce the forward operator $\mathcal{F}$ to describe the forward modeling such that
	\begin{equation}
		\mathcal{F}:f(x)\mapsto U(x,t)|_{\Gamma\times (0,T]}.
	\end{equation}
	One can show that $\mathcal{F}$ is a linear bounded operator from $L^2(\Omega)$ to $L^2(\Gamma\times[0,T])$ by regularity estimate for the second-order hyperbolic equations; see \cite{evansPartialDifferentialEquations2010} for details.

	\subsection{Inverse source problems for the wave equations}
	In this section, we establish the inverse source problems with respect to the system \eqref{appWave_sys}.  The inverse source problem is stated as follows.
	
	{\it\textbf{The inverse source problem for the flow measurement:}
		
		Given the sound pressure data $U_\mathrm{data} = U(x,t)|_{\Gamma\times (0,T]}$ received on  the boundary, reconstruct the source term $f(x)$ in \eqref{appWave_sys}, i.e., find $f(x)$ such that $\mathcal{F}f=U_\mathrm{data}$.
	}
	
	\begin{remark}
		After modeling using wave equations, flow measurement and \linebreak PAT/TAT show striking similarities. They are both problems of recovering the source term \big($f_t(x)\lambda(x,t)$ for flow measurement and $f(x)\mathrm{d} \delta(t)/\mathrm{d} t$ for PAT/TAT\big) from the known data received by acoustic pressure transducers on a surface $\Gamma$ based on propagating process of the ultrasonic sound wave.
	\end{remark}
	
	It is well-known that the uniqueness of recovering a general source term $F(x,t)$ from boundary data does not hold. For instance, let $u\in C_0^\infty(\Omega\times[0,T])$ and $F(x,t)=\partial^2_t u/c^2-\Delta u$. Then the boundary data $u|_{\Gamma\times (0,T]}$, namely the observation, is always equal to zero. This means that the received data do not allow the uniqueness of reconstruction of the non-trivial source term. We refer to \cite{isakovInverseSourceProblems1990,dehoopUniquenessSeismicInverse2016a,baoInverseSourceProblems2018} for the uniqueness and stability of various inverse source problems.
	
	In the context of flow measurement, the unknown source term can be decomposed into a product of the known part $\lambda(x,t)$, representing the background wavefield, and the unknown part $f(x)$, representing the particle positions. In the following, we provide a uniqueness result of the inverse source problems with a specific type of unknown sources, i.e. $\lambda(x,t)=h(ct-p\cdot x)$, which is known as traveling plane waves. Without loss of generality, we suppose that $\Omega$ is compactly supported in $ B(0,1)$ for the following proposition.
	
	
	\begin{proposition}\label{thm:uniqueness}
		Assume that $h(\tau)=0,\tau<1$ and $\hat{h}(\zeta)$ is nonvanishing for every $\zeta$, where 
		\begin{equation*}
			\hat{h}(\zeta)=\int_{-\infty}^\infty h(\tau)e^{i\zeta \tau}\mathrm{d}\tau.
		\end{equation*}
		Suppose that the measurement surface $\Gamma=\partial\Omega$. The reference time $T$ is chosen properly to ensure that the scattered wave can be entirely received. Then $\mathcal{F}$ is injective.
	\end{proposition}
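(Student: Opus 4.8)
The plan is to prove the equivalent statement that $\mathcal{F}f=0$ implies $f\equiv 0$; injectivity then follows from linearity of $\mathcal{F}$. Throughout I use two elementary observations: since $\lambda$ is a traveling plane wave propagating at the sound speed $c$, we have $|p|=1$, and since $\overline{\Omega}\subset B(0,1)$ we have $|p\cdot x|<1$ on $\overline{\Omega}$, so that for $t\le 0$ and $x\in\overline{\Omega}$ one has $ct-p\cdot x<1$ and hence $\lambda(x,t)\equiv 0$ there. Suppose now $U|_{\Gamma\times(0,T]}=0$. Because $f$ is supported in $\Omega$, $U$ solves the homogeneous wave equation in $(\mathbb{R}^n\setminus\overline{\Omega})\times(0,T]$ with zero initial data and zero Dirichlet data on $\Gamma=\partial\Omega$; by uniqueness for the exterior problem, $U\equiv 0$ there. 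Combined with causality, with the vanishing of $\lambda f$ for $t\le 0$ noted above, and with the requirement that $T$ be large enough for the scattered wave to have \emph{entirely} left $\overline{\Omega}$ — which is what ``$T$ chosen properly'' means, and which in dimension three is automatic by Huygens' principle once the incident pulse has passed — the extension of $U$ by zero is a compactly supported distribution on $\mathbb{R}^{n+1}$ that still solves $\tfrac{1}{c^2}\partial_t^2 U-\Delta U=\lambda f$, with $\lambda f$ also compactly supported.

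Next I would take the Fourier transform in $(x,t)$ (with the sign convention used for $\hat h$). A direct change of variables $\tau=ct-p\cdot x$ in the time integral gives
\[
  \widehat{\lambda f}(\xi,\omega)=\frac{1}{c}\,\hat h\!\left(\frac{\omega}{c}\right)\hat f\!\left(\xi+\frac{\omega}{c}\,p\right),
\]
a genuine continuous function since $\lambda f$ is compactly supported. Because $U$ is now a compactly supported distribution, $\widehat{U}$ is a smooth (indeed entire) function, and the transformed equation reads $\bigl(|\xi|^2-\omega^2/c^2\bigr)\widehat{U}(\xi,\omega)=\widehat{\lambda f}(\xi,\omega)$. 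Restricting to the real characteristic cone $\{|\xi|=|\omega|/c\}$ kills the left-hand side, so $\hat h(\omega/c)\,\hat f(\xi+\tfrac{\omega}{c}p)=0$ there; since $\hat h$ is nonvanishing on $\mathbb{R}$, we conclude $\hat f(\xi+\tfrac{\omega}{c}p)=0$ for every $\omega\ne 0$ and every $\xi$ with $|\xi|=|\omega|/c$.

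It remains to check that these vanishing sets fill up frequency space. For fixed $\omega\ne 0$, $\{\xi+\tfrac{\omega}{c}p:|\xi|=|\omega|/c\}$ is the sphere of radius $|\omega|/c$ centered at $\tfrac{\omega}{c}p$, which passes through the origin because $|p|=1$; a vector $\eta$ lies on this sphere for some admissible $\omega$ precisely when $p\cdot\eta\ne 0$ (then $\omega/c=|\eta|^2/(2\,p\cdot\eta)$). Hence $\hat f$ vanishes on the dense open set $\{p\cdot\eta\ne 0\}$, and since $f\in L^2(\Omega)$ has compact support, $\hat f$ is continuous — in fact entire — so $\hat f\equiv 0$, that is, $f=0$.

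The genuinely delicate step is the first one: converting finite-time boundary data into a compactly supported space--time datum, i.e. quantifying ``$T$ chosen properly''. In three dimensions this is exactly the strong Huygens principle, applied after the incident pulse has traversed $\overline{\Omega}$; in other dimensions, or for incident profiles $h$ without bounded temporal support, $U$ fails to be compactly supported and one should instead transform only in $t$, interpret $\widehat{U}(\cdot,\omega)$ as the outgoing Helmholtz field forced by $\tfrac1c\hat h(\omega/c)e^{i\omega p\cdot x/c}f(x)$ (via the limiting-absorption correspondence for causal solutions), and deduce from $\widehat{U}(\cdot,\omega)=0$ in $\mathbb{R}^n\setminus\overline{\Omega}$ that its far-field pattern vanishes, which by Rellich's lemma and the far-field representation yields the same conclusion $\hat f(\xi+\tfrac\omega c p)=0$ on $\{|\xi|=|\omega|/c\}$. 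A second, harmless point is that with $f\in L^2$ and $\partial\Omega$ only Lipschitz, $U$ need not be $C^1$ across $\Gamma$; the argument uses only the vanishing of $U$ itself outside $\Omega$, never a trace of $\nabla U$.
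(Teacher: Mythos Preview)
Your argument is correct and follows essentially the same route as the paper: reduce by linearity, use exterior uniqueness to make $U$ vanish outside $\Omega$, test the equation against plane waves $e^{i(\omega t+d\cdot x)}$ on the characteristic set $|d|=|\omega|/c$ (you do this via the space–time Fourier transform, the paper via an explicit integration by parts over $\Omega\times[0,\infty)$ with vanishing boundary terms), deduce $\hat f(d+\tfrac{\omega}{c}p)=0$ from $\hat h\neq 0$, and conclude by density. The only cosmetic differences are that the paper restricts to $\omega\ge 0$ and then invokes conjugate symmetry $\hat f(-\xi)=\overline{\hat f(\xi)}$ to cover the other half-space, whereas you allow both signs of $\omega$ directly; and the paper simply asserts $U(\cdot,t)\equiv 0$ for $t\ge T$ where you correctly identify this as Huygens' principle plus an implicit compact-support hypothesis on $h$.
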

	\begin{proof}
		Note that the operator $\mathcal{F}$ is linear in $L^2(\Omega)$. Then it is sufficient to consider the case of $U_\mathrm{data}=0$, i.e. $U(x,t)|_{\Gamma\times[0,T]}=0$. For the choice of $T$, the solution $U(x,t)$ in $\Omega$ vanishes when $t\geq T$, which implies $U(x,t)=\partial _t U(x,t)=0,x\in\Omega,t\geq T$. Then multiplying both sides of \eqref{appWave} by the test function $\varphi(x,t)=e^{i(\omega t+d\cdot x)}$ with $d\in\mathbb{R}^n,|d|=\omega/c$ and integrating by parts over $\Omega\times[0,\infty)$ yield
		\begin{equation}
			\int_{\Omega\times[0,\infty)}\lambda(x,t)f(x)\varphi(x,t)\mathrm{d}x\mathrm{d}t=\int_0^\infty\int_{\Gamma}\big(U(x,t)\partial_\nu\varphi(x,t)-\partial_\nu U(x,t)\varphi(x,t)\big)\mathrm{d}S\mathrm{d}t.
			\label{propUniqueIBP}
		\end{equation}
		Since $U$ is a solution to the homogeneous wave equation in $\big(\mathbb{R}^n\setminus \bar{\Omega}\big)\times[0,\infty)$ and $U(x,t)=0$ on $\Gamma\times[0,\infty)$, then from known results we obtain that $U\equiv0$ in $\big(\mathbb{R}^n\setminus \bar{\Omega}\big)\times[0,T]$. Hence $\partial_\nu U(x,t)=0$ on $\Gamma\times[0,\infty)$, together with $h(z)\equiv0,z<1$, implies that
		\begin{equation*}
			\int_{\Omega\times[0,\infty)}h(ct-p\cdot x)f(x)e^{i(\omega t+d\cdot x)}\mathrm{d}x\mathrm{d}t=0,~|d|=\omega/c.
		\end{equation*}
		Equivalently,
		\begin{align}
			&\int_{\Omega}\frac{1}{c}e^{i\frac{\omega}{c}p\cdot x}\hat{h}(\frac{\omega}{c})f(x)e^{id\cdot x}\mathrm{d}x=0,~|d|=\omega/c.\notag\\
			\Longleftrightarrow& \hat{h}(|d|)\int_{\Omega}f(x)e^{i(d+|d|p)\cdot x}\mathrm{d}x=0.\notag\\
			\Longleftrightarrow& \hat{h}(|d|)\hat{f}(d+|d|p)=0.\notag
		\end{align}
		Hence $\hat{f}(d+|d|p)=0$ from $\hat{h}(\zeta)\not=0$, which is in fact that $\hat{f}(\xi)=0$ for $\xi\in\{x\in\mathbb{R}^n,x\cdot p>0\}\cup \{0\}$. Considering that $\hat{f}(-\xi)=\overline{\hat{f}(\xi)}$, we obtain $\hat{f}(\xi)=0$ a.e. in $\mathbb{R}^n$. Finally from the invertibility of the Fourier transform we get $f(x)=0$, which completes the proof.
	\end{proof}

	\begin{remark}
		We think that one can obtain a uniqueness result for the case when $\partial\Omega\setminus\Gamma$ is a part of a plane or of a sphere using the argument by \cite{isakovUniquenessInverseConductivity2007}. This type of boundary enables to reflect the solution across $\partial\Omega\setminus\Gamma$. In addition, it should be noted that the stability is another significant property of the inverse source problem and is a component in the definition of the well-posedness. The stability of an inverse problem means that one can get the almost exact solution even if the observation data is noisy. The stability also guarantees the convergence rate of the iterative method; see \cite{dehoopLocalAnalysisInverse2012,mittalConvergenceRatesIteratively2022,dehoopAnalysisMultilevelProjected2015a,mittalIterativelyRegularizedLandweber2021,wangConvergenceAnalysisInexact2018} for details. We refer to \cite{baoInverseSourceProblem2011,kirkebyStabilityInverseSource2020,chengIncreasingStabilityInverse2016,liIncreasingStabilityInverse2017} for investigations of the stability of various inverse source problems for wave equations in the frequency domain.
	\end{remark}

	While the full boundary measurements are ideal, only partial data are available in practice since the receivers can only be arranged on the surface of the flow, above the riverbed, or at specific positions within the water flow. In short, the distribution of transducers featuring data collection is quite sparse in the flow. Moreover, the layouts of receivers can be modified in our model according to different application scenarios.  
	
	Proposition \ref{thm:uniqueness} guarantees that the measurement data on $\Gamma=\partial\Omega$ can uniquely determine the particle positions in principle. However, it is challenging to directly solve the operator equation with respect to $f$. We propose the corresponding minimization problem:
	\begin{equation}
		\mathcal{J}(f):=\frac{1}{2}\|\mathcal{F}(f)-U_\mathrm{data}\|^2_{L^2(\Gamma\times (0,T])}.
		\label{funcJ}
	\end{equation}
	According to Proposition \ref{thm:uniqueness}, the global minimizer exists and is unique.
	
	\section{Numerical schemes and experiments \label{sec:numer results}}
	In this section, we will first explain the approach to minimize the objective functional \eqref{funcJ} and then present several examples to demonstrate the performance of our mathematical model for the flow measurement.
	The density distribution $f(x)$ of particles is of the form:
	\[
	f(x) = \sum_{m=1}^M \chi_{\{2|x-s_m|\leq D\}}(x),
	\]
	where $M$ particles with uniform diameter $D$ m are located at center $s_m$ for $m = 1, \ldots, M$, respectively, and $\chi$ is the characteristic function.
	\subsection{Statement of the minimization problem}\label{subsec:statament of the minimization problem}
	We minimize the objective functional $\mathcal{J}(f)$ defined by \eqref{funcJ}, via gradient typed method. Various iterative methods can be applied for solving the typical linear least squares problems. In particular, the conjugate gradient method has been shown to be robust and numerically efficient when dealing with a wide range of optimization problems. Our numerical experiments demonstrate that the conjugate gradient method is also efficient for the inverse source problems for wave equations.

	Iterative methods for minimizing the functional $\mathcal{J}(f)$ require the knowledge of the first-order derivative. In the following proposition, we derive the expressions of the first-order Fr\'{e}chet derivative of the functional $\mathcal{J}(f)$ and the adjoint operator $\mathcal{F}^*:L^2(\Gamma\times(0,T])\rightarrow L^2(\Omega)$, which are employed for iterative algorithms. More precisely, we have the following result.
	\begin{proposition}
		The first-order Fr\'{e}chet derivative of functional $\mathcal{J}(f)$ is given by
		\begin{equation}
			D\mathcal{J}(f)=\mathcal{F}^*\big(\mathcal{F}(f)-U_\mathrm{data}\big).
			\label{gradF}
		\end{equation}
		For the adjoint operator
		\[\mathcal{F}^*:V(x,t)\mapsto g(x),\]
		with $V(x,t)\in L^2(\Gamma\times(0,T]), g(x)\in L^2(\Omega)$, we consider the time reversed inhomogeneous wave equations with constant coefficients,
		\begin{subequations}
			\begin{align}
				&\frac{1}{c^2}\partial^2_t W(x,t)-\Delta W(x,t)=V(x,t),~(x,t)\in\mathbb{R}^n\times (0,T],\label{adjointAppWave}\\
				&W(x,T)=\partial_t W(x,T)=0,~x\in\mathbb{R}^n.\label{adjointInitCond}
			\end{align}
		\end{subequations}
		Then the image $g(x)$ is given by
		\begin{equation}
			g(x)=\int_0^T \lambda(x,t)W(x,t)\mathrm{d}t.
			\label{imAdjoint}
		\end{equation}
	\end{proposition}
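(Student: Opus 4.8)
The statement bundles two independent claims, which I would dispatch in order. The identity \eqref{gradF} is the soft part: because $\mathcal{F}$ is a bounded \emph{linear} operator from $L^2(\Omega)$ into $L^2(\Gamma\times(0,T])$, the functional $\mathcal{J}$ is quadratic, and a direct expansion gives
\[
\mathcal{J}(f+\delta f)=\mathcal{J}(f)+\big\langle \mathcal{F}(f)-U_{\mathrm{data}},\,\mathcal{F}(\delta f)\big\rangle_{L^2(\Gamma\times(0,T])}+\tfrac12\big\|\mathcal{F}(\delta f)\big\|^2_{L^2(\Gamma\times(0,T])}.
\]
The term linear in $\delta f$ rewrites, by definition of the adjoint, as $\langle \mathcal{F}^*(\mathcal{F}(f)-U_{\mathrm{data}}),\,\delta f\rangle_{L^2(\Omega)}$, and the quadratic remainder is $O(\|\delta f\|^2)$; identifying the Riesz representative of the bounded linear functional $\delta f\mapsto D\mathcal{J}(f)[\delta f]$ yields \eqref{gradF}. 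No further analysis is required here.

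The substance is the characterization of $\mathcal{F}^*$, for which the plan is a duality (Lagrangian) argument. Let $U$ be the forward state solving \eqref{appWave_sys} with source $\lambda f$, so that $\mathcal{F}(f)=U|_{\Gamma\times(0,T]}$, and let $W$ be the adjoint state solving \eqref{adjointAppWave}--\eqref{adjointInitCond}, in which $V$ is read as a single-layer source carried by $\Gamma$ (a finite sum of point sources in the discrete case $\Gamma=\{r_1,\dots,r_N\}$). I would multiply \eqref{appWave} by $W$, integrate over $\mathbb{R}^n\times(0,T)$, and integrate by parts twice in $t$ and twice in $x$, using that the operator $c^{-2}\partial_t^2-\Delta$ is formally self-adjoint. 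The temporal boundary terms at $t=0$ disappear by the initial conditions $U(\cdot,0)=\partial_t U(\cdot,0)=0$, those at $t=T$ by the terminal conditions $W(\cdot,T)=\partial_t W(\cdot,T)=0$, and the spatial boundary contributions vanish at infinity because finite speed of propagation keeps $U(\cdot,t)$ and $W(\cdot,t)$ compactly supported for every $t\in[0,T]$, their sources being supported in the compact sets $\overline{\Omega}$ and $\Gamma$. What survives is
\[
\int_0^T\!\!\int_{\mathbb{R}^n}\lambda(x,t)\,f(x)\,W(x,t)\,\mathrm{d}x\,\mathrm{d}t=\int_0^T\!\!\int_{\mathbb{R}^n}U(x,t)\,V(x,t)\,\mathrm{d}x\,\mathrm{d}t.
\]

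Now I would read off the two sides. By Fubini and $\mathrm{supp}\,f\subset\Omega$, the left-hand side equals $\int_\Omega f(x)\big(\int_0^T\lambda(x,t)W(x,t)\,\mathrm{d}t\big)\,\mathrm{d}x=\langle f,g\rangle_{L^2(\Omega)}$ with $g$ as in \eqref{imAdjoint}; since $V$ is supported on $\Gamma$, the right-hand side is $\int_0^T\!\int_\Gamma U\,V\,\mathrm{d}S\,\mathrm{d}t=\langle \mathcal{F}(f),V\rangle_{L^2(\Gamma\times(0,T])}$. Thus $\langle \mathcal{F}(f),V\rangle=\langle f,g\rangle$ for every $f\in L^2(\Omega)$, which is exactly $\mathcal{F}^*V=g$.

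The hard part will be making the integration by parts rigorous, since the adjoint state $W$ driven by a source concentrated on $\Gamma$ has limited regularity, so the boundary pairing of $U$ against $V$ on $\Gamma$ and the pairing of $U$ against $c^{-2}\partial_t^2 W-\Delta W$ must be justified rather than manipulated formally. The route I would take is the standard one: first prove the identity for $V$ in a smooth dense subclass (with correspondingly smooth, compactly supported $f$), where every step is classical, and then pass to the limit, using the boundedness of $\mathcal{F}$ recorded above together with the continuous dependence $V\mapsto g$, which follows from the energy estimates and the hidden trace regularity for second-order hyperbolic equations (see \cite{evansPartialDifferentialEquations2010}). A subsidiary point, absorbed into the same limiting framework, is the well-posedness of \eqref{adjointAppWave}--\eqref{adjointInitCond} with a measure-valued right-hand side, which is defined by transposition.
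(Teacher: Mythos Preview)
Your proposal is correct and follows essentially the same route as the paper: the Fr\'echet derivative is obtained from the quadratic structure of $\mathcal{J}$, and the adjoint formula is derived by pairing the forward and adjoint states over $\mathbb{R}^n\times(0,T)$, integrating by parts, and using the zero initial/terminal data together with compact spatial support to kill the boundary terms. The paper's argument is terser (it simply asserts the Fr\'echet identity ``by definition'' and performs the integration by parts without discussing regularity), whereas you additionally flag the need for a density argument to handle the limited regularity of $W$ when $V$ is concentrated on $\Gamma$; this extra care is justified but does not change the underlying strategy.
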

	\begin{proof}
		By the definition of Fr\'{e}chet derivative we get
		\begin{equation*}
			D\mathcal{J}(f)=\mathcal{F}^*\big(\mathcal{F}(f)-U_\text{data}\big).
		\end{equation*}
		
		Assume that $V(x,t)\in L^2(\Gamma\times(0,T])$. By the definition of the adjoint operator and the fact that the support of $V(\cdot,t)$ is contained in $\Gamma$, we have
		\begin{align}
			\langle f,\mathcal{F}^*(V)\rangle_{L^2(\Omega)}&=\langle \mathcal{F}(f),V\rangle_{L^2(\mathbb{R}^n\times(0,T])}\notag\\
			&=\langle U_\text{data},V\rangle_{L^2(\mathbb{R}^n\times(0,T])}\notag\\
			&=\langle U,\frac{1}{c^2}\partial^2_t W(x,t)-\Delta W(x,t)\rangle_{L^2(\mathbb{R}^n\times(0,T])}.\notag
		\end{align}
		Then note that the compact supports of solutions $U(\cdot,t)$ and $W(\cdot,t)$ in $\mathbb{R}^n$, the initial conditions of $U(x,t)$ and the final conditions of $W(x,t)$, then we obtain by integration by parts,
		\begin{align}
			\langle f,\mathcal{F}^*(V)\rangle_{L^2(\Omega)}&=\langle \frac{1}{c^2}\partial^2_t U(x,t)-\Delta U(x,t),W(x,t)\rangle_{L^2(\mathbb{R}^n\times(0,T])}\notag\\
			&=\langle \lambda(x,t)f(x),W(x,t)\rangle_{L^2(\mathbb{R}^n\times(0,T])}\notag\\
			&=\langle f(x),\int_0^T \lambda(x,t)W(x,t)\mathrm{d}t\rangle_{L^2(\mathbb{R}^n)}\notag\\
			&=\langle f(x),\int_0^T \lambda(x,t)W(x,t)\mathrm{d}t\rangle_{L^2(\Omega)},\notag
		\end{align}
		which implies that
		\begin{equation}
			g(x)=\mathcal{F}^*(V)=\int_0^T \lambda(x,t)W(x,t)\mathrm{d}t.\notag
		\end{equation}
		This completes the proof.
	\end{proof}
	\subsection{Reconstruction of the stationary source term}
	In this part, we test the validity and accuracy of the inverse source problems for the wave equations with the stationary source term corresponding to \eqref{appWave_sys} in $\mathbb{R}^2$ for different parameters. 
	
	
	For all examples in this subsection, we take $472\times216$ grid points, uniformly distributed in the domain $\Omega=[0,L_1]\times[0,L_2]=[0,4.71~\mathrm{m}]\times[0,2.15~\mathrm{m}]$. The sound speed is chosen as $c=1500 ~\mathrm{m/s}$. We arrange 10 circular particles as stationary source terms in $\Omega$. The diameter of particles in the flow is chosen to be $0.14$ m according to the statistical data in \cite{stoneEvaluatingVelocityMeasurement2007} where median particle diameters were 0.109 m and 0.114 m for the St. Maries and Potlatch rivers. The time increment is selected based on Courant–Friedrichs–Lewy condition for wave equations.
	We simulate the propagation of the ultrasonic wave in the homogeneous medium governed by \eqref{appWave_sys} in MATLAB with the k-Wave toolbox\cite{treebyKWaveMATLABToolbox2010}.
	
	In the following numerical experiments, we present several settings when choosing different parameters, such as the frequency of the source wave, the layout of the receivers, and the level of noise in the registered data, to verify the validity of different parameter choices. These experiments can be used to perform flow measurements in real-world environments.
	
	\subsubsection{Example 1: Different frequencies of source wave\label{subsubsec:eg1}}
	In this example, we assume that $\lambda(x,t)$ is independent of $x$ and take the Gaussian signal as the emitted source wave by circular particles, which is given by
	\begin{equation}
		\lambda(x,t)=\exp \left(-\pi^{2} q_{0}^{2}\left(t-\frac{p}{2}\right)^{2}\right), ~ t \in[0, p],
	\end{equation}
	where $q_0$ is the central frequency of Gaussian signal and $[0,p]$ is the support of Gaussian signal. The parameter $p$ varies with the central frequency as $p=6/(\pi q_0)$.
	This experiment aims to test the suitable frequency range with which the position of sources can be determined precisely.
	
	The received data are presented in \Fref{recDiffFreq}. \Fref{diffFreq} shows the contours of the density functions $f(x)$.
	\Fref{diffFreqA} shows the exact positions of particles by yellow disks.
	The reconstructions of the source term are shown in \Fref{diffFreq}(b-d) where the red points represent the locations of receivers. The relative $L^2$ errors of reconstructions for each case are given in \Tref{errorEg1}.
	\begin{table}
		\centering
		\caption{Relative $L^2$ errors of reconstructions after 100 iterations.}
		\begin{tabular}{*{4}{c}}
			\toprule
			Frequency (kHz)&1&10 &100 \\\midrule
			Relative $L^2$ error &$9.103\times 10^{-1}$&$2.763\times10^{-1}$&$1.713\times 10^{-6}$\\
			\bottomrule
		\end{tabular}
		\label{errorEg1}
	\end{table}
	
	One can observe that the image of the registered data is getting sharper as the central frequency increases, resulting in the different resolution of recovering source terms. 
	The reconstruction is not able to capture the positions of sources when the frequency of the source wave is less than 1 kHz; see \Fref{diffFreqB}. 
	Although the relative $L^2$ error is 27.63\%, which seems large, the recovering image for 10 kHz still allows for good determination of the positions of particles; see \Fref{diffFreqC}. When the frequency of the source wave is sufficiently large, the reconstruction of the position of sources is of satisfactory quality; see \Fref{diffFreqD}. Therefore, a source wave with a central frequency of more than 10 kHz is sufficient to detect particles of 0.14 m diameter.
	
	\begin{remark}
		The numerical results are consistent with the strategy of	choosing suitable central frequency according to the size of particles. In general, the spatial resolution limit is regarded as being one-half of the wavelength. In the above setting, it is straightforward to determine the minimum central frequency for imaging,
		\begin{equation*}
			q_{0,\text{min}}\approx \frac{1}{2}\cdot\frac{1500 \text{~m/s}}{0.14 \text{~m}}\approx 5.357 \text{~kHz}.
		\end{equation*}
		
		On the other hand, it is well known that higher frequencies produce a higher resolution, but depth of penetration is limited in practical measurements. Thus taking into account these two considerations, we propose to select test frequencies close to $10$ kHz. 
	\end{remark}
	\begin{figure}
		\centering
		\begin{minipage}[t]{0.32\textwidth}
			\centering
			\subfigure[1 kHz]{\includegraphics[width=0.95\columnwidth]{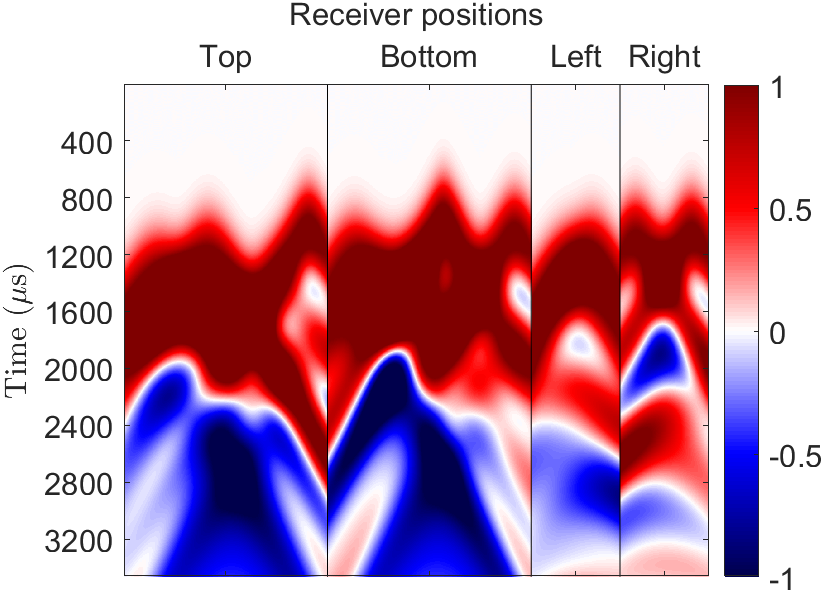}}
		\end{minipage}
		\begin{minipage}[t]{0.32\textwidth}
			\centering
			\subfigure[10 kHz]{\includegraphics[width=0.95\columnwidth]{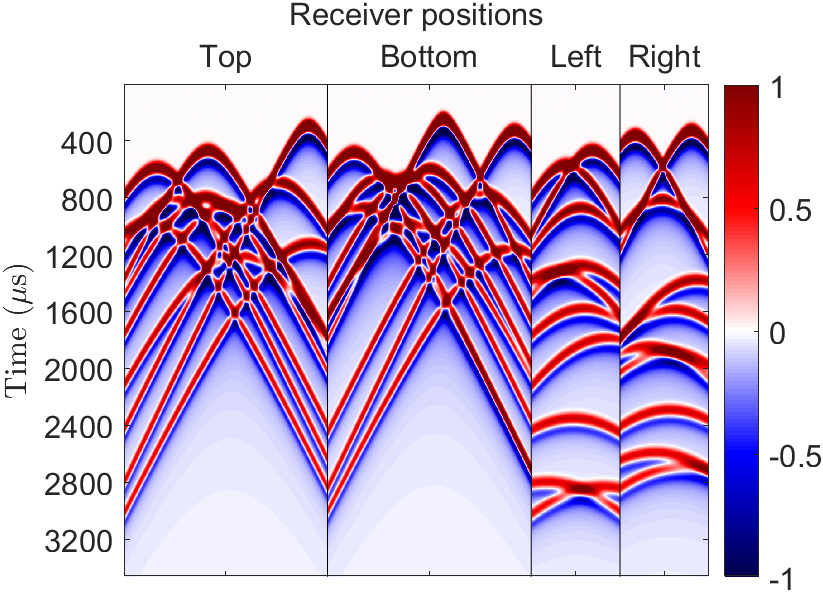}}
		\end{minipage}
		\begin{minipage}[t]{0.32\textwidth}
			\centering
			\subfigure[100 kHz]{\includegraphics[width=0.95\columnwidth]{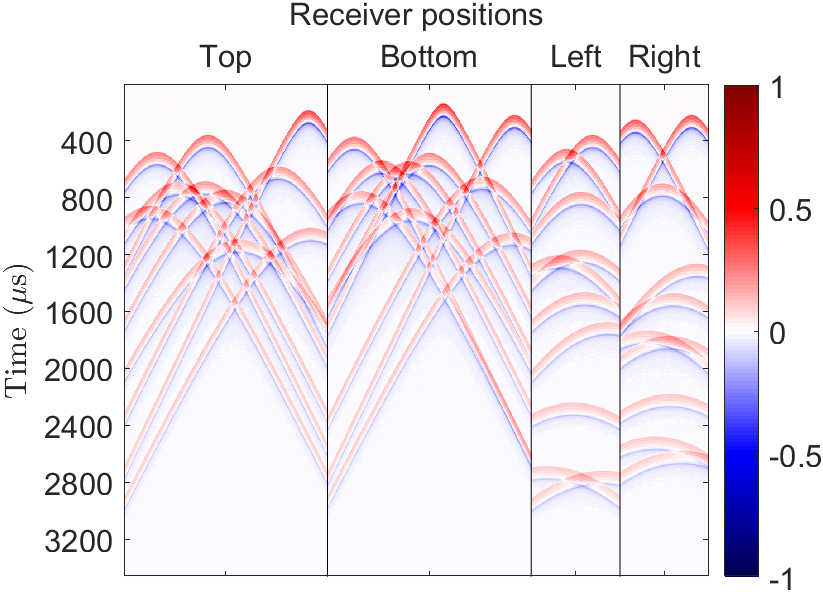}}
		\end{minipage}
		\caption{Received data $U_\mathrm{data}$ in the settings of different central frequencies $q_0$ of the source wave. Colors represent the intensities of received data. The horizontal and vertical axes represent the time step when solving the forward problem and the position of receivers, respectively.}
		\label{recDiffFreq}
	\end{figure}
	\begin{figure}
		\centering
		%
		
		\begin{minipage}[t]{0.45\textwidth}
			\flushright
			\subfigure[Ground truth]{\includegraphics[width=0.67\columnwidth]{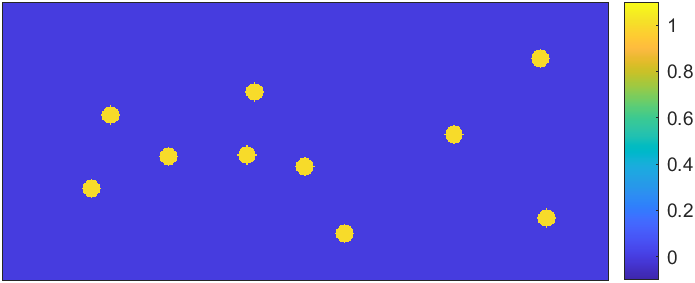}
				\label{diffFreqA}}
		\end{minipage}
		\begin{minipage}[t]{0.45\textwidth}
			\subfigure[1 kHz]{\includegraphics[width=0.67\columnwidth]{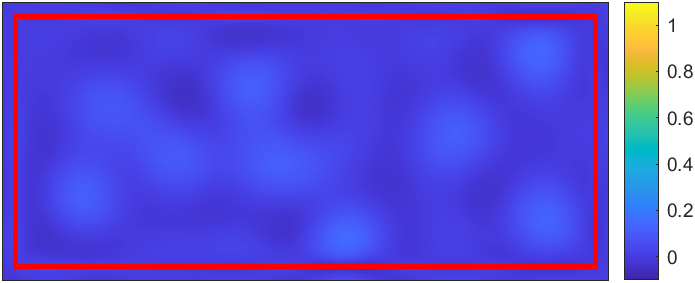}
				\label{diffFreqB}}
		\end{minipage}
		
		\begin{minipage}[t]{0.45\textwidth}
			\flushright
			\subfigure[10 kHz]{\includegraphics[width=0.67\columnwidth]{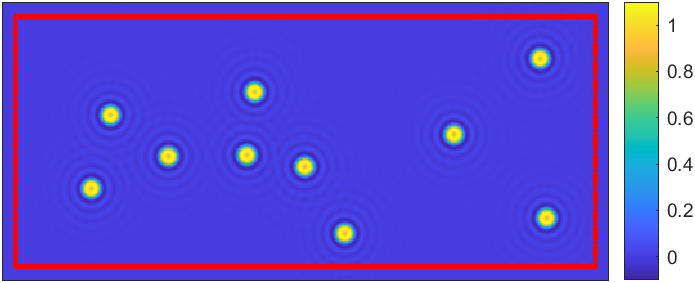}
				\label{diffFreqC}}
		\end{minipage}
		\begin{minipage}[t]{0.45\textwidth}
			\subfigure[100 kHz]{\includegraphics[width=0.67\columnwidth]{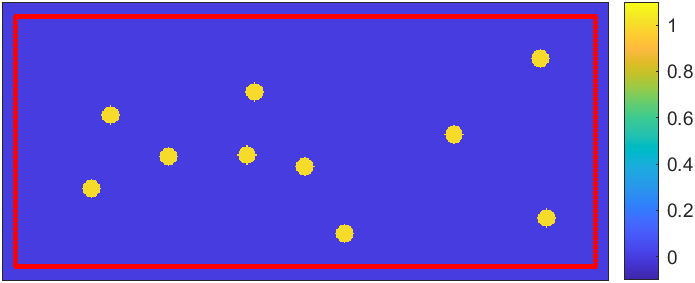}
				\label{diffFreqD}}
		\end{minipage}
		%
		
		%
		\caption{The contours of $f(x)$, i.e., reconstructions of the source term $f(x)$ for different central frequencies $q_0$ of the source wave. The red points represent the receivers. (a) is the exact locations of particles. (b-d) are reconstructions of the source term when the frequency of the source wave is 100 kHz, 10 kHz, and 1 kHz, respectively.}
		\label{diffFreq}
	\end{figure}
	\subsubsection{Example 2: Different layouts of receivers\label{subsubsec:eg2}}
	In this example, the receivers are placed in different positions in order to compare the performance of the model and select the suitable layouts that balance the accuracy and cost. 
	The configuration with more receivers will increase the economic cost. However, fewer receivers would lead to less registered data, which greatly impacts the quality of reconstructions. Therefore, we design a example to tackle the problem of how to balance the expense of receivers and the quality of reconstruction.
	This setting can be meaningful to practical measurement. Suppose that this two-dimensional region $\Omega$ is the longitudinal section of the flow. Then the top and bottom layouts of receivers correspond to the configuration that receivers are close to the surface of the river and the riverbed.
	The lateral layout of receivers represents the configuration of the vertically located receivers in the river.
	
	In this experiment, we take the Gaussian signal as the source wave. The central frequency is set to be $100$ kHz, which shows excellent performances in section \ref{subsubsec:eg1}. The received data are shown in \Fref{recDiffLayout}. \Fref{diffLayout} shows the contours of the density function $f(x)$. \Fref{diffLayoutA} shows the exact positions of particles by yellow disks. \Fref{diffLayout}(b-d) present the reconstructions when setting different layouts of receivers. The relative $L^2$ errors for each case are shown in \Tref{errorEg2}.
	\begin{table}
		\centering
		\caption{Relative errors of reconstructions after 100 iterations.}
		\begin{tabular}{*{4}{c}}
			\toprule
			Layout & Top & Top and bottom & All-around\\\midrule
			Relative $L^2$ error & $3.525\times10^{-1}$ &$1.141\times 10^{-1}$&$1.713\times10^{-6}$\\\bottomrule
		\end{tabular}
		\label{errorEg2}
	\end{table}
	It shows that the inversion with the all-around layout of receivers performs the best of three, as expected. 
	Besides, there are some undesired errors around the sources in the inversions due to the loss detection angles in the other two cases. In the case that data are collected on top receivers in \Fref{diffLayoutB}, we get less information for the particles near the bottom.
	\begin{figure}
		\centering
		\begin{minipage}[t]{0.3\textwidth}
			\centering
			\subfigure{\includegraphics[width=0.95\columnwidth]{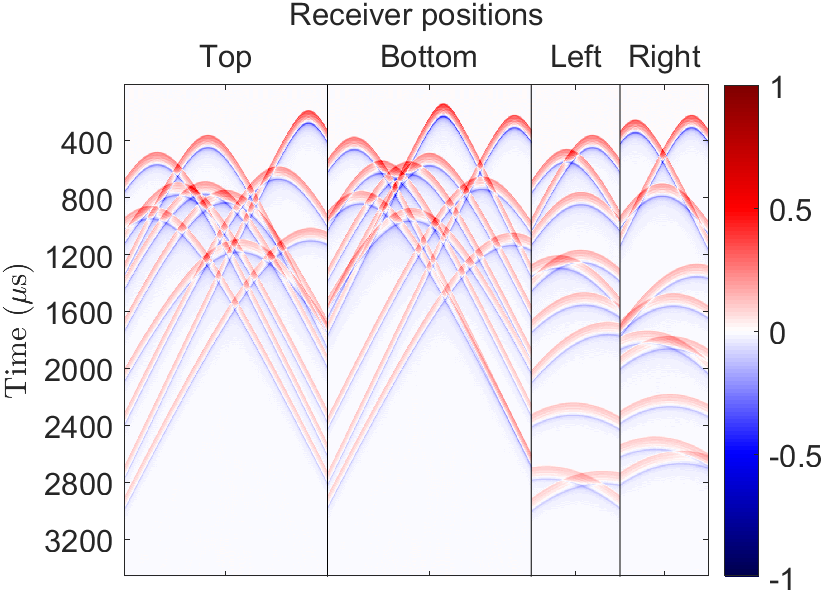}}
		\end{minipage}
		\caption{Received data $U_\mathrm{data}$ in the settings of different layouts of receivers. Colors represent the intensities of received data. The horizontal and vertical axes represent the time step when solving the forward problem and the position of receivers, respectively.}
		\label{recDiffLayout}
	\end{figure}
	\begin{figure}
		\centering
		
		\begin{minipage}[t]{0.45\textwidth}
			\flushright
			\subfigure[Ground truth]{\includegraphics[width=0.67\columnwidth]{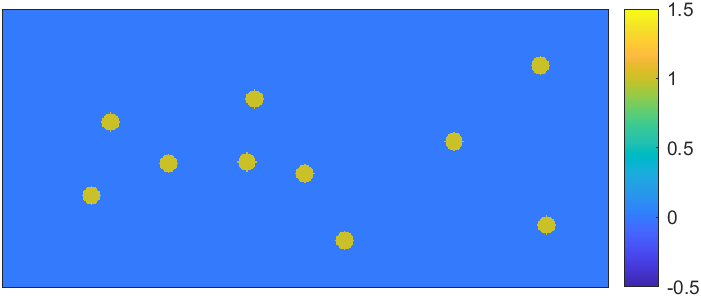}
				\label{diffLayoutA}}
		\end{minipage}
		\begin{minipage}[t]{0.45\textwidth}
			\subfigure[Top layout]{\includegraphics[width=0.67\columnwidth]{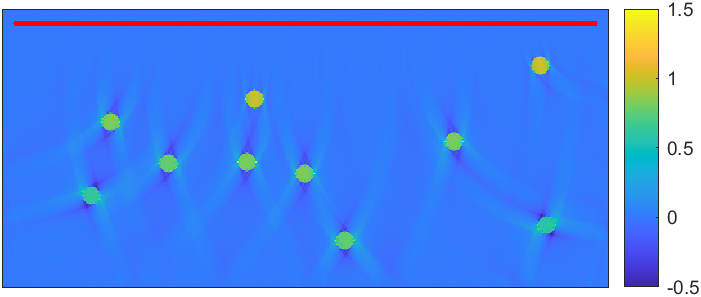}
				\label{diffLayoutB}}
		\end{minipage}
		
		\begin{minipage}[t]{0.45\textwidth}
			\flushright
			\subfigure[Top and bottom layout]{\includegraphics[width=0.67\columnwidth]{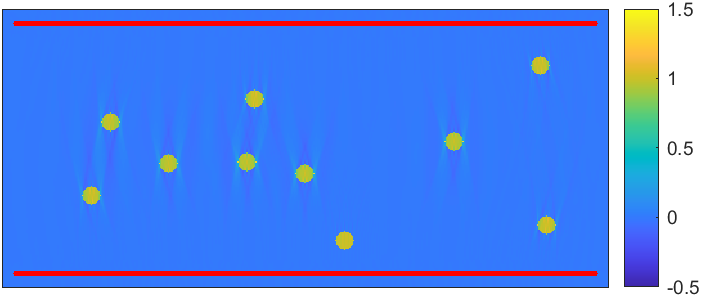}
				\label{diffLayoutC}}
		\end{minipage}
		\begin{minipage}[t]{0.45\textwidth}
			\subfigure[All-around layout]{\includegraphics[width=0.67\columnwidth]{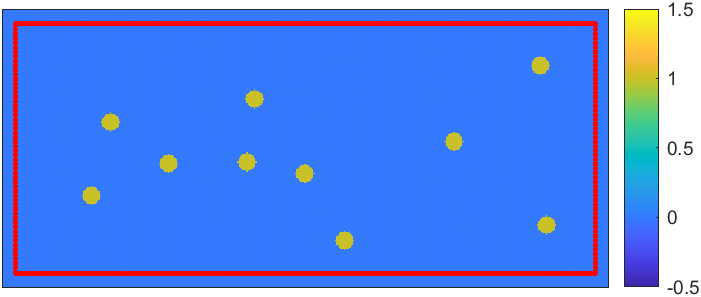}
				\label{diffLayoutD}}
		\end{minipage}
		%
		
		
		\caption{The contours of $f(x)$, i.e., reconstructions of the source term $f(x)$ for different layouts of receivers. The red points represent the receivers. (a) is the ground truth of particles. (b-d) are reconstructions of the source term when choosing different layouts of receivers.}
		\label{diffLayout}
	\end{figure}
	
	\subsubsection{Example 3: Different noise levels}
	The received data in practical measurements are susceptible to contamination from various sources, including instrument noise, an inherent limit to accuracy, and random outliers, which occur due to multiple scatterings by particles. We refer to \cite{durgeshNoiseCorrectionTurbulent2014a,elgarCurrentMeterPerformance2001} on denoising the data. In order to test the robustness of our model in terms of noise, we add noise to received data $U_\text{data}$ and then recover the source term $f(x)$ from noised data. Since the noise of instruments based on the Doppler effect used in the flow measurement, such as ADV and ADCP, is well approximated as unbiased Gaussian white noise (see \cite{rennieDeconvolutionTechniqueSeparate2007,durgeshNoiseCorrectionTurbulent2014a} and references therein for details), we test our model when adding zero-mean Gaussian white noise with different variances. Specifically, contamination data with the zero-mean white noise of variance $\sigma^2$, which indicates the noise level, is given by
	\begin{equation}
		\tilde{U}_\text{data}=U_\text{data}+\max |U_\text{data}|\cdot\sigma \mathcal{N},
	\end{equation}
	where the random variable $\mathcal{N}$ follows the standard normal distribution. Then perturbed data $\tilde{U}_\text{data}$ are used to reconstruct the particle sources $f(x)$.
	
	\Fref{recDiffNoise} contains the received data when adding noise with different levels. \Fref{diffNoise} shows the contours of the density function $f(x)$. \Fref{diffNoiseA}  shows the exact positions of particles by yellow disks. \Fref{diffNoise}(b-d) present the reconstructions when adding noise with different levels. It can be observed that the received data consist of almost noise and little useful information when the noise level is high in \Fref{recDiffNoise}. However, the reconstructions in \Fref{diffNoise} show great performances, where the particles appear clearly. The experiments of our model demonstrate that the reconstructions maintain excellent performances even if the noise level is quite high.
	
	\begin{figure}
		\centering
		\begin{minipage}[t]{0.32\textwidth}
			\centering
			\subfigure[$\sigma=0.05$]{\includegraphics[width=0.95\columnwidth]{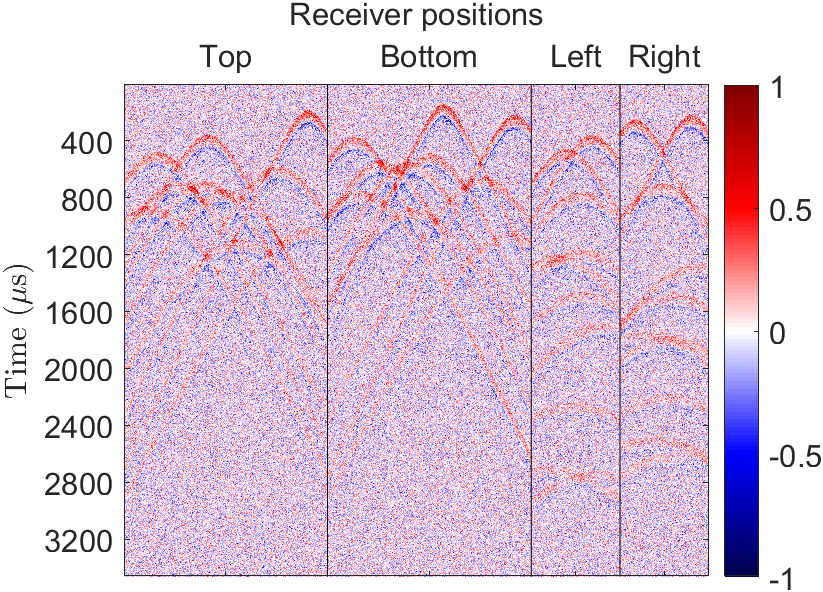}}
		\end{minipage}
		\begin{minipage}[t]{0.32\textwidth}
			\centering
			\subfigure[$\sigma=0.1$]{\includegraphics[width=0.95\columnwidth]{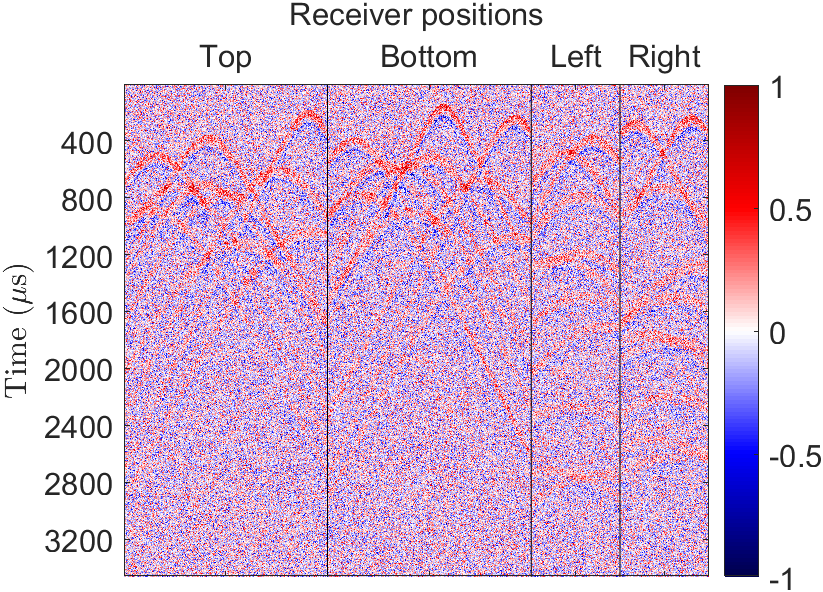}}
		\end{minipage}
		\begin{minipage}[t]{0.32\textwidth}
			\centering
			\subfigure[$\sigma=0.2$]{\includegraphics[width=0.95\columnwidth]{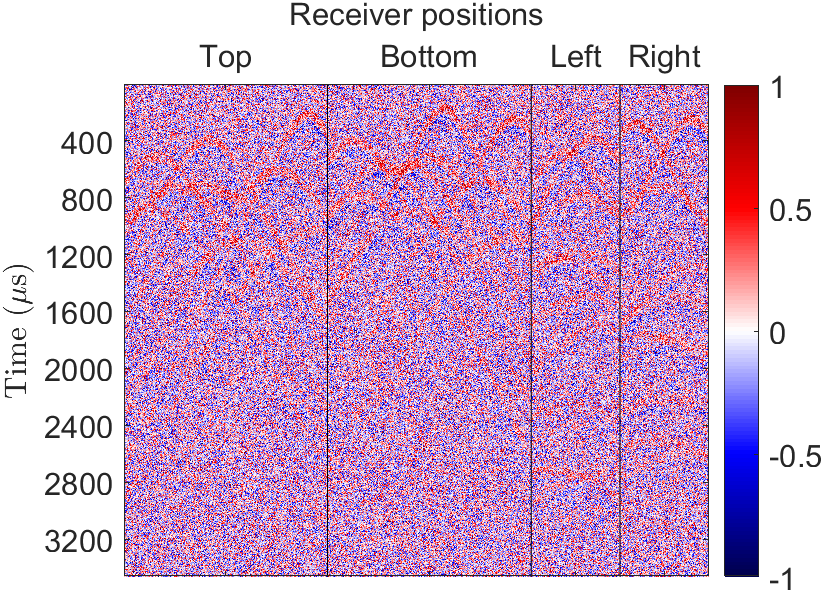}}
		\end{minipage}
		\caption{Received data $U_\mathrm{data}$ for different noise levels $\sigma$. Colors represent intensities of received data. The horizontal axis and vertical axis represent the time step when solving the forward problem and the position of receivers, respectively.}
		\label{recDiffNoise}
	\end{figure}
	\begin{figure}
		\centering
		\begin{minipage}[t]{0.45\textwidth}
			\flushright
			\subfigure[Ground truth]{\includegraphics[width=0.67\columnwidth]{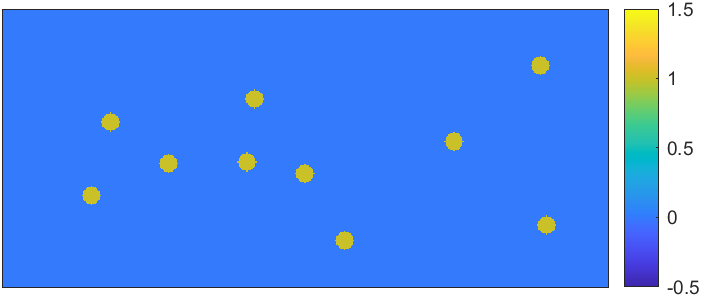}
				\label{diffNoiseA}}
		\end{minipage}
		\begin{minipage}[t]{0.45\textwidth}
			\subfigure[$\sigma=0.05$]{\includegraphics[width=0.67\columnwidth]{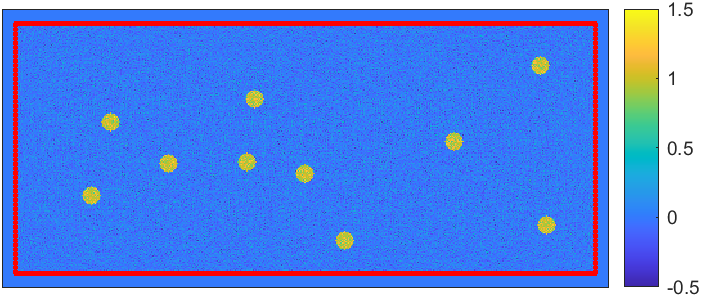}
				\label{diffNoiseB}}
		\end{minipage}
		
		\begin{minipage}[t]{0.45\textwidth}
			\flushright
			\subfigure[$\sigma=0.1$]{\includegraphics[width=0.67\columnwidth]{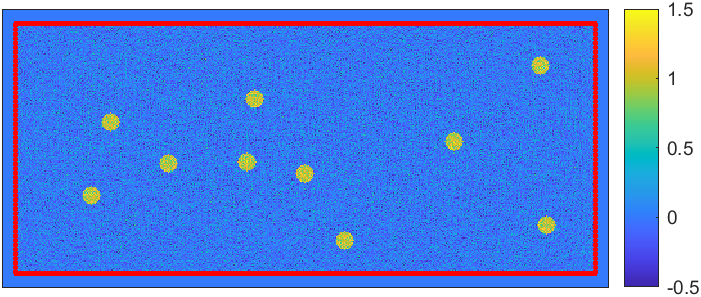}
				\label{diffNoiseC}}
		\end{minipage}
		\begin{minipage}[t]{0.45\textwidth}
			\subfigure[$\sigma=0.2$]{\includegraphics[width=0.67\columnwidth]{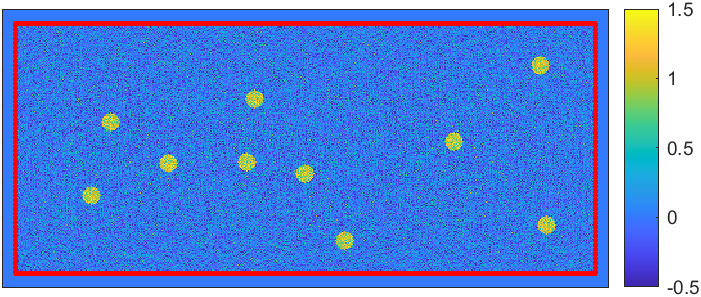}
				\label{diffNoiseD}}
		\end{minipage}
		\caption{The contours of $f(x)$, i.e., reconstructions of the source term $f(x)$ for different noise levels $\sigma$. The red points represent the receivers. (a) is the ground truth of particles. (b-d) are reconstructions of the source term for noise level $\sigma=0.05,~0.1,~0.2$.}
		\label{diffNoise}
	\end{figure}
	
	\subsection{Reconstruction of the moving source term and the flow field}
	In this section, suppose that the positions and sizes of sources are computed through the gradient based method introduced in section \ref{subsec:statament of the minimization problem}.
	We consider the inversion of the velocity field of the synthetic flow according to the model for moving particles given by \eqref{oriWaveEqns}. The whole procedure consists of recovering of particle velocities and reconstruction of the velocity field of the flow.
	The algorithms, such as nearest point searching and optical flow algorithm, are applied for recovering the local (or whole, dependent on distribution and number of sources) flow field.
	
	\subsubsection{Preparation of registering data on receivers}
	Since a synthetic flow is of interest, we prepare the registering data on receivers using numerical results of the direct source problem.
	Given a known time-dependent vector field $\mathbf{r}(x,t)$ in two-dimensional space representing the flow velocity at position $x=(x_1,x_2)$ and time $t$, the problem encountered first is how to calculate the trajectory of a moving particle initially located at point $x$. The trajectory $C(x,t)$ is governed by the following ordinary differential equation,
	\begin{equation}
		\partial_t C(x,t)=\mathbf{r}\big(C(x,t),t\big),~C(x,0)=x.
		\label{trajODE}
	\end{equation}
	In the following numerical example, the forward Euler scheme is applied for solving \eqref{trajODE} to obtain the trajectory of particles, which is given by
	\begin{equation}
		\frac{C(x,t_{j+1})-C(x,t_j)}{\Delta t}=\mathbf{r}\big(C(x,t_j),t_j\big),~C(x,t_0)=x.
	\end{equation}
	
	Once getting the trajectory of every particle, we solve the forward problems \eqref{appWave_sys} periodically. Meanwhile, sound pressure data $U(x,t)|_{\Gamma\times(0,T]}$ are collected on the fixed receivers $r_1, r_2,\dots,r_N$. This whole procedure allows us to simulate the processes of sound wave scattering by moving particles and registering data on receivers.
	
	
	\subsubsection{Inversion for velocity of particles}
	
	When the distribution of particles in the flow is relatively sparse and the displacement of each particle is relatively small due to the short period between adjacent time sections, it is reasonable to use the method of nearest point searching to determine the particle positions and particle velocities.  
	
	For the case of a large number of particles, identifying the corresponding particles in adjacent time sections is quite challenging. Thus another approach, the optical flow algorithm, is employed to deal with this situation. The optical flow is defined as the apparent motion of image objects between two consecutive frames, which is a fundamental approach to calculating the motion of image intensities. We refer the readers to \cite{broxHighAccuracyOptical2004,bruhnLucasKanadeMeets2005} for a comprehensive survey of optical flow techniques. In the case of a large number of particles, all reconstructed frames of moving particles could also be regarded as a sequence of intensity images. In addition, the minor difference between consecutive frames also makes the optical flow approach an appropriate choice to calculate the flow field. The implementation of optical flow techniques used in the following examples is described in \cite{liuPixelsExploringNew2009,broxHighAccuracyOptical2004,bruhnLucasKanadeMeets2005}.
	
	For the sake of testifying the accuracy of recovering the velocity of flow at multiple points and reconstructing the whole flow field, we design two numerical experiments. The main difference between them is the number and the size of particles in the flow. The configuration with a small number of large-sized particles aims to study the accuracy of recovering the local velocity of flow. Another configuration with a large number of small-sized particles aims to test the accuracy of rebuilding the velocity field of the whole flow. The results of numerical experiments are shown as follows.
	
	%
	%
	
	\subsubsection{Inversion for the Taylor-Green vortex}
	We choose the Taylor-Green vortex as the background flow field in this experiment. The Taylor-Green vortex was first studied in \cite{taylorMechanismProductionSmall1937}, which has a closed form of solution to incompressible Navier-Stokes equations in the two-dimensional space. The time-independent Taylor-Green vortex in the domain $\Omega=[0,L_1]\times[0,L_2]$ is given by
	\begin{equation}
		\mathbf{r}(x,t)=\left(\cos \left(\frac{2\pi}{L_1}x_1\right)\sin \left(\frac{2\pi}{L_2}x_2\right),-\sin\left( \frac{2\pi}{L_1}x_1\right)\cos \left(\frac{2\pi}{L_2}x_2\right)\right).
	\end{equation} 
	
	We place 13 particles with a diameter of 0.14 m in the background flow. The initial positions of particles and several time sections in $[0,T]$ are presented in the left column of \Fref{invFew}. The figures in the middle and right columns present the inversions of particles using the iterative method in section \ref{sec:directInv}. The white and green arrows indicate the exact velocity vectors and the approximated velocity vectors calculated from the inversions. The approximated vectors in the middle and right columns are obtained using the nearest point searching and optical flow methods, respectively. 
	
	As can be seen, there is a good agreement between the computed velocity vectors and the ground truth. 
	The relative error of the nearest point searching and optical flow method are $4.61\%$ and $3.75\%$ respectively, demonstrating the reconstruction is a pretty excellent approximation of the original flow field.
	
	\begin{figure}
		\centering
		\begin{minipage}[t]{0.3\textwidth}
			\subfigure{
				\includegraphics[width=0.95\columnwidth]{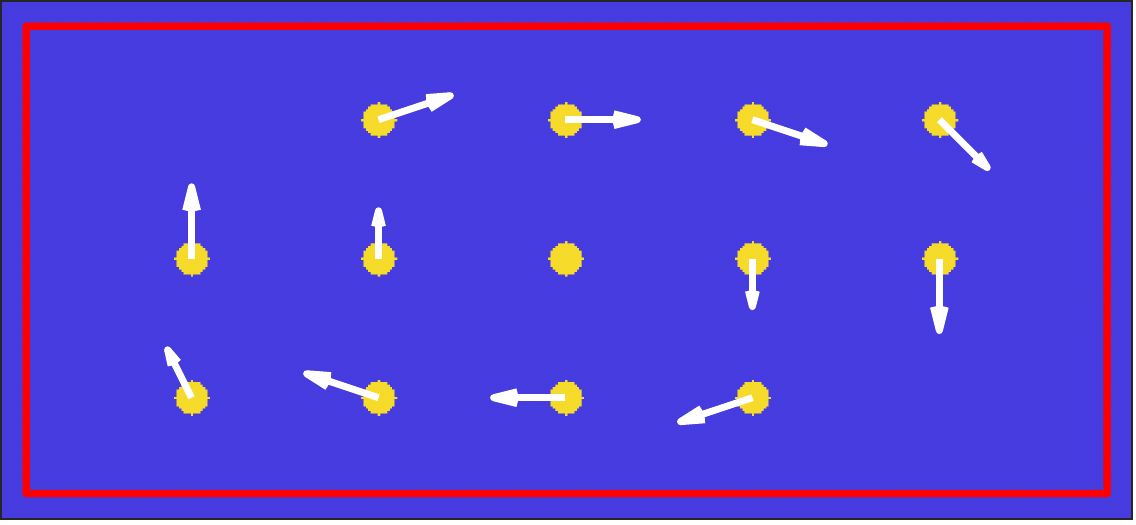}}
			
			\subfigure{
				\includegraphics[width=0.95\columnwidth]{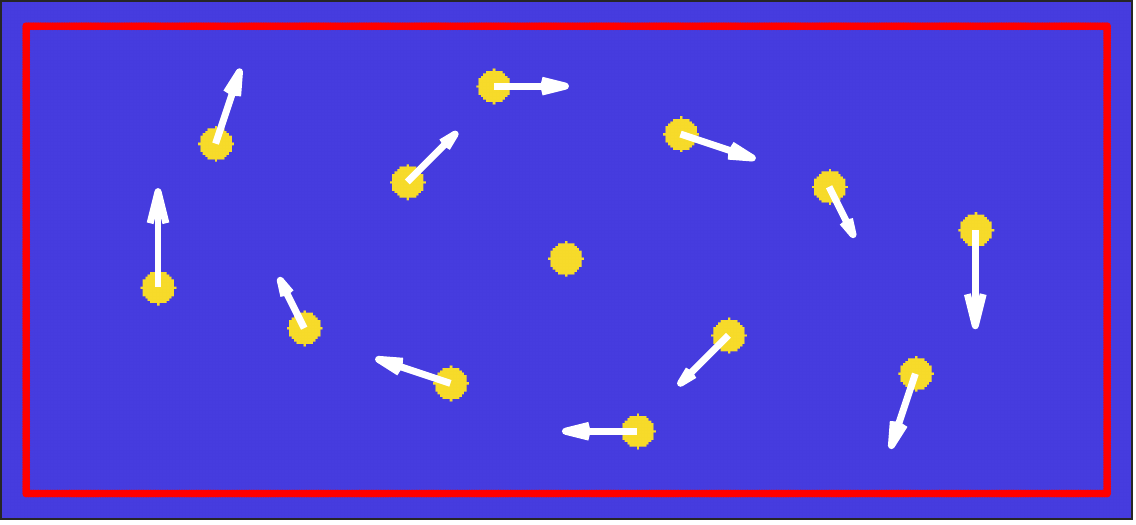}}
			
			\subfigure{
				\includegraphics[width=0.95\columnwidth]{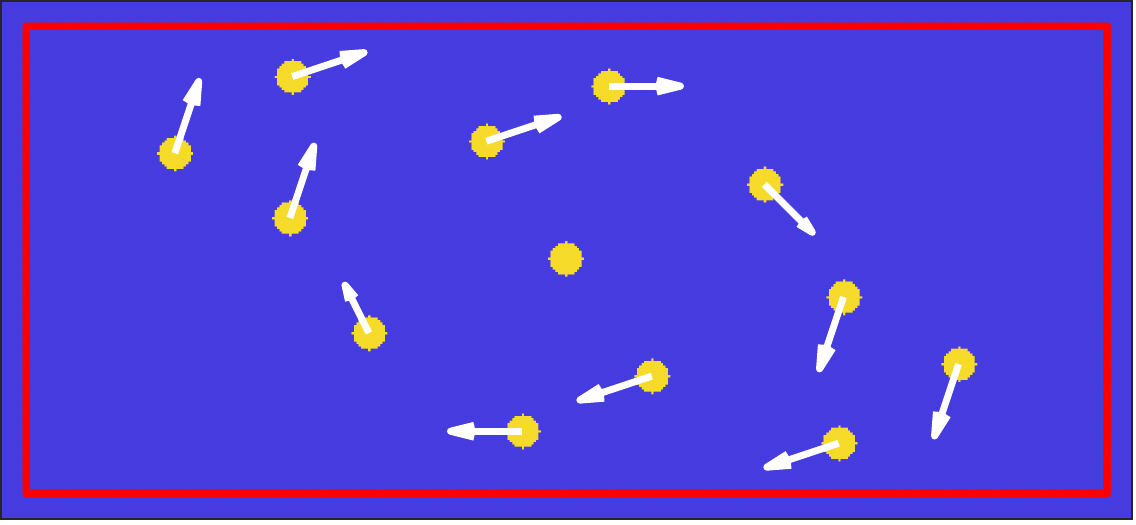}}
			
		\end{minipage}
		\begin{minipage}[t]{0.3\textwidth}
			\subfigure{
				\includegraphics[width=0.95\columnwidth]{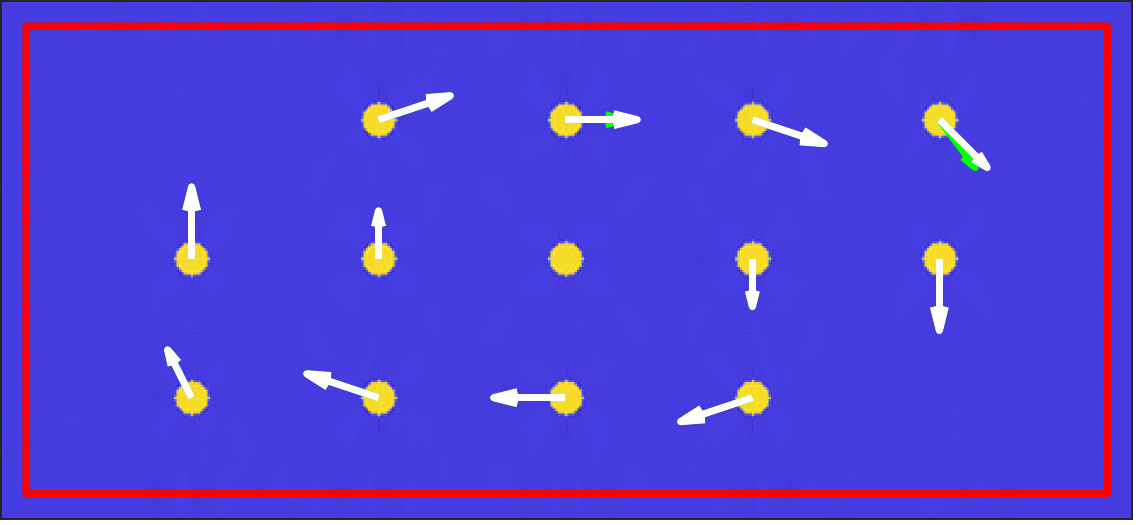}}
			
			\subfigure{
				\includegraphics[width=0.95\columnwidth]{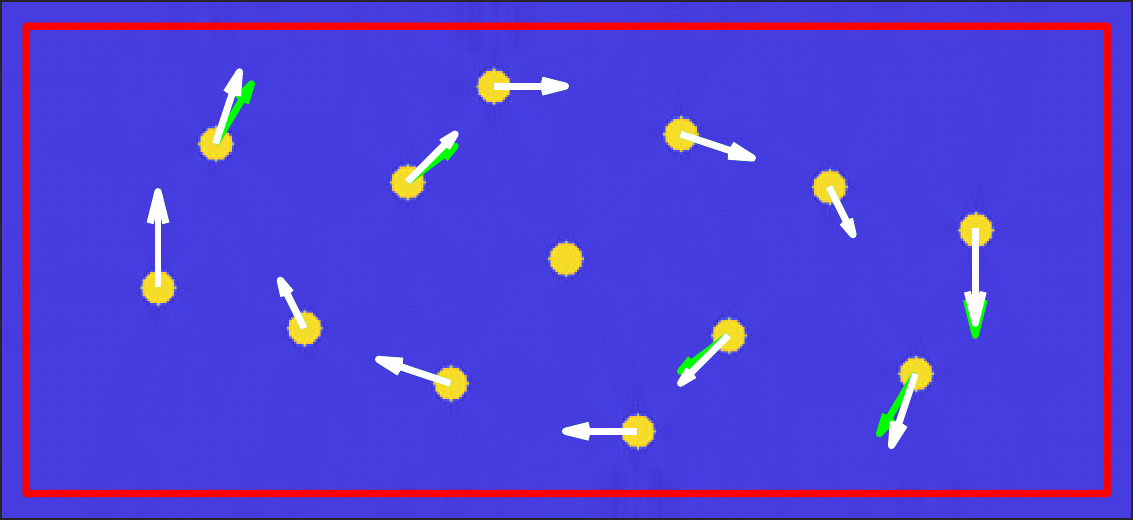}}
			
			\subfigure{
				\includegraphics[width=0.95\columnwidth]{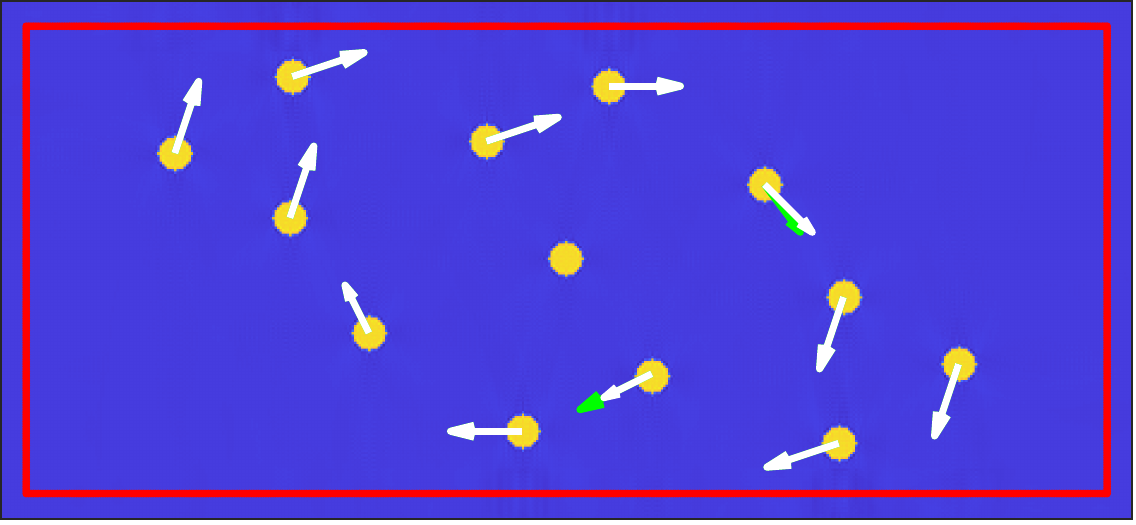}}
			
		\end{minipage}
		\begin{minipage}[t]{0.3\textwidth}
			\subfigure{
				\includegraphics[width=0.95\columnwidth]{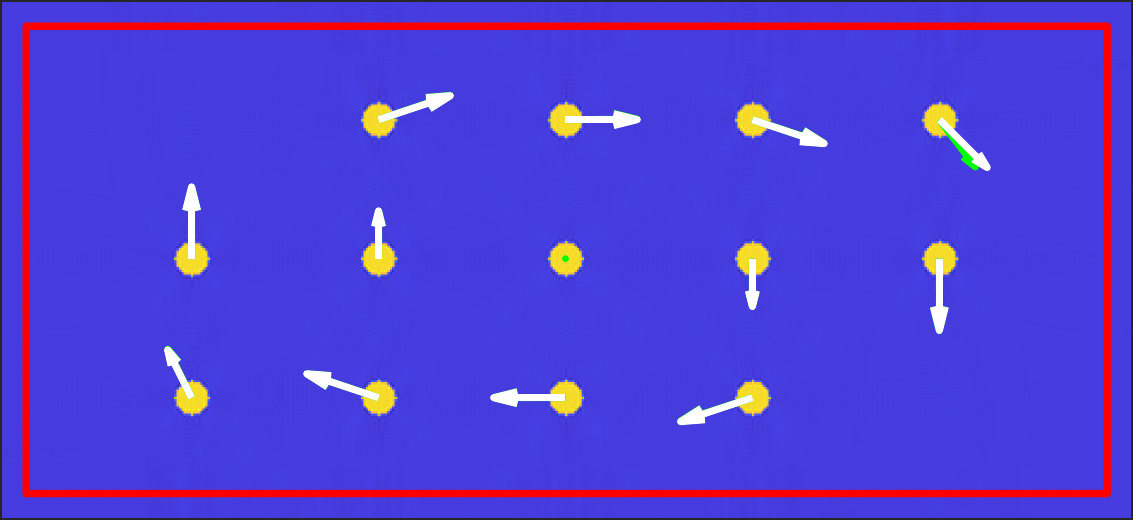}}
			
			\subfigure{
				\includegraphics[width=0.95\columnwidth]{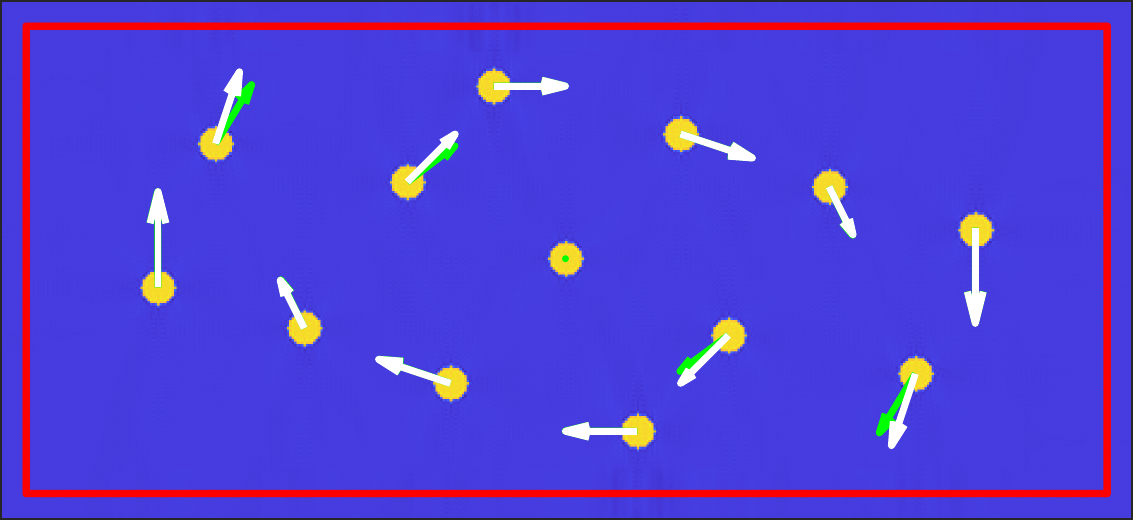}}
			
			\subfigure{
				\includegraphics[width=0.95\columnwidth]{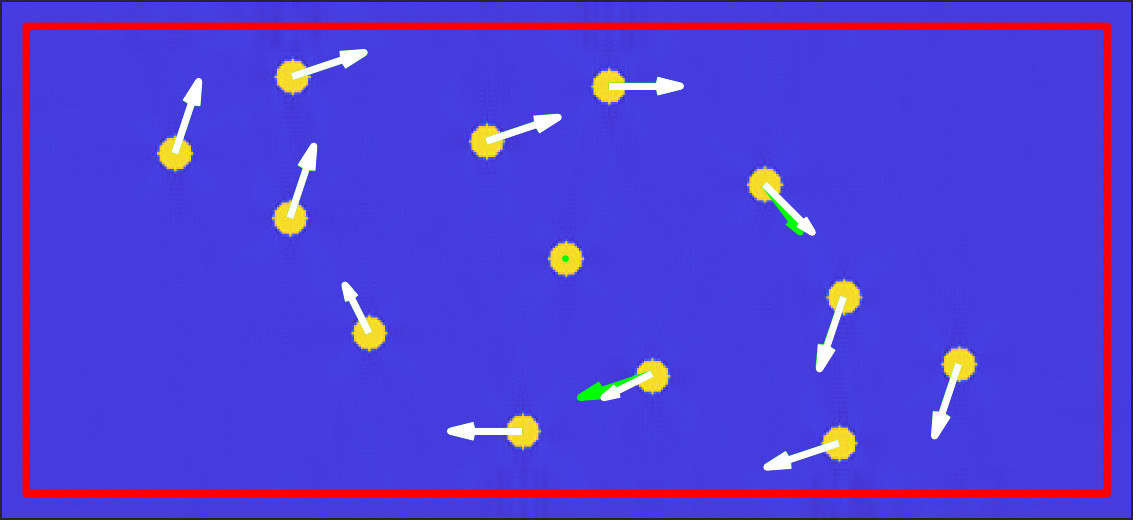}}
			
		\end{minipage}
		\begin{minipage}[t]{0.07\textwidth}
			\subfigure{\includegraphics[width=0.73\columnwidth]{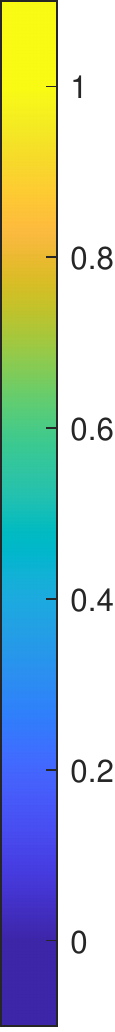}}
		\end{minipage}
		
		\caption{The contours of $f(x)$, the particle velocity, and the inversion for the local velocity of flow. The figures in the left column are exact figures of moving particles at several time points. The figures in the middle and right columns are both inverse results of moving particles from observations. The white and green arrows represent the exact velocity vectors and the approximated velocity vectors in the center of the disk particles at the current time. The approximated velocity vectors in the middle and right columns are computed using the nearest point searching and optical flow methods, respectively.}
		\label{invFew}
	\end{figure}
	
	\subsubsection{Inversion for the whole flow field}
	The following example shows the performance of our model for recovering the whole flow field. We select an unsteady flow field, the von K\'{a}rm\'{a}n vortex street, to evaluate the performance of our model. In the following example, more but smaller particles are placed in the flow field to simulate the practical measurements. In this setting, it is suitable to use the optical flow method to recover the whole flow field once we obtain the positions of particles on every frame since the smaller size makes it more difficult to identify the corresponding particles when applying the method of nearest point searching.
	
	The von K\'{a}rm\'{a}n vortex street was studied extensively by Theodore von K\'{a}rm\'{a}n in \cite{karmanAerodynamics1963}. Unlike the Taylor-Green vortex, the von K\'{a}rm\'{a}n vortex street does not have a closed formula. Therefore, the numerical simulation of dynamic flow is applied to generate a von K\'{a}rm\'{a}n vortex street. The simulation of dynamic flow is based on the lattice Boltzmann method (LBM), which is implemented by the Palabos library \cite{lattPalabosParallelLattice2021} with C\texttt{++}.
	We place 150 small particles of 0.01 m diameter in the simulated von K\'{a}rm\'{a}n vortex street; see the left figure in \Fref{invInKarman}. When obtaining the observations, the inverse source problem is solved to get the position of every particle at the time points of registering data; see the right figure in \Fref{invInKarman}. 
	
	Finally, the optical flow algorithm is applied to recover the vector field of the von K\'{a}rm\'{a}n vortex street from the positions of particles at each time section; see \Fref{invWholeKarman}. One can observe that the inversions of the particles at every time section are pretty accurate compared with the ground truth, thanks to the well-conditioned property of our model. They demonstrate the excellent effect of recovering the flow field.
	
			%
			%
		%
	
	\begin{figure}
		\centering
		\begin{minipage}{0.32\textwidth}
			%
			
			\subfigure{
				\includegraphics[width=0.95\columnwidth]{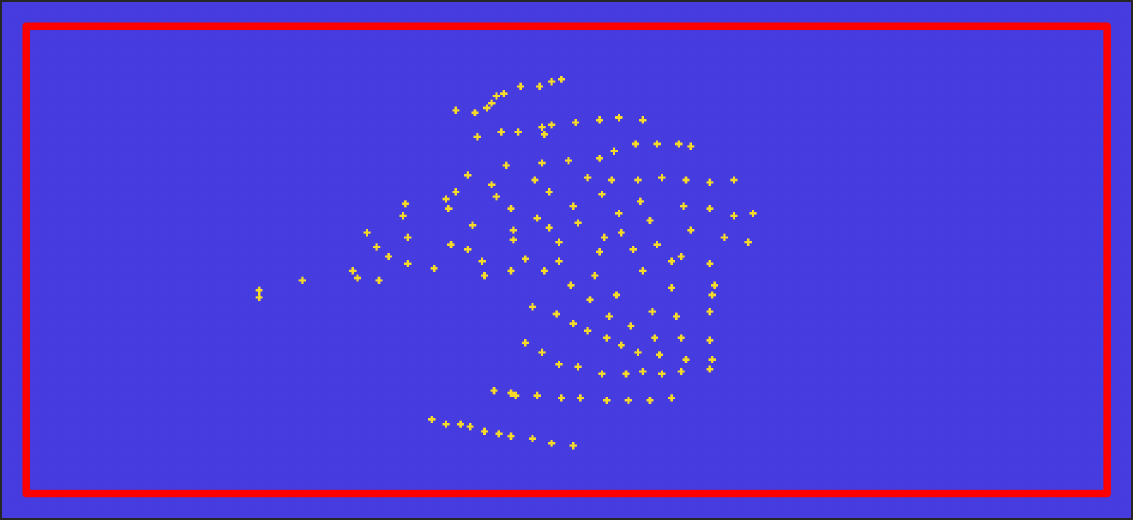}}
			
		\end{minipage}
		\begin{minipage}{0.32\textwidth}
			%
			
			\subfigure{
				\includegraphics[width=0.95\columnwidth]{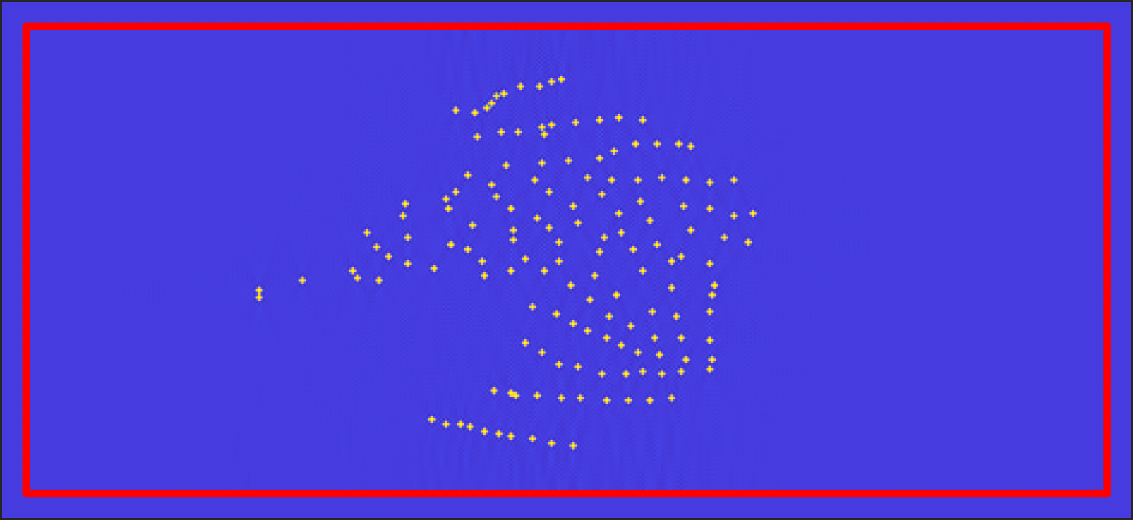}}
			
		\end{minipage}
		
		\caption{The contours of $f(x)$, i.e., particles inversion in the von K\'{a}rm\'{a}n vortex street at a certain time section. The figure on the left is the image of moving particles in von K\'{a}rm\'{a}n vortex street at a certain time section. The figures on the right is the reconstructed image of particles from data collected on receivers which are red points on images.}
		\label{invInKarman}
	\end{figure}
	
	\begin{figure}
		\centering

		\begin{minipage}{0.22\textwidth}
			\subfigure{
				\includegraphics[width=0.95\columnwidth]{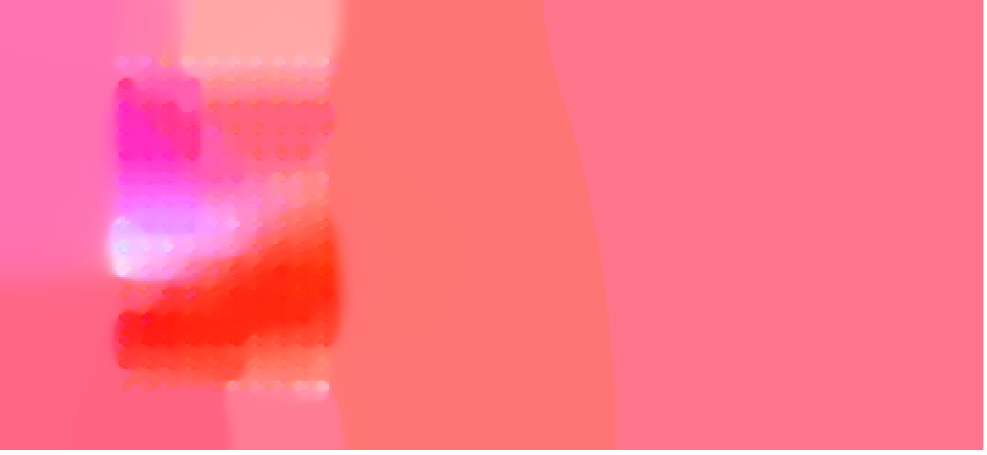}}
			\subfigure{
				\includegraphics[width=0.95\columnwidth]{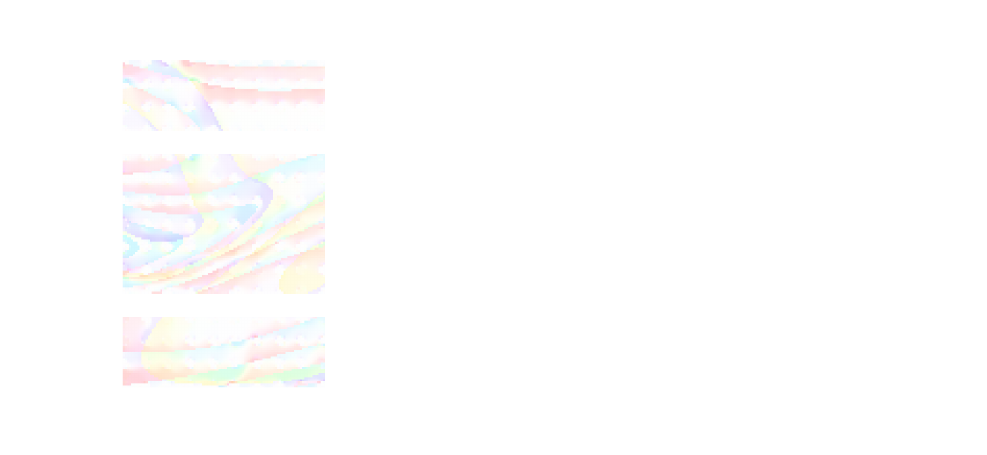}}
		\end{minipage}
		\begin{minipage}{0.22\textwidth}
			\subfigure{
				\includegraphics[width=0.95\columnwidth]{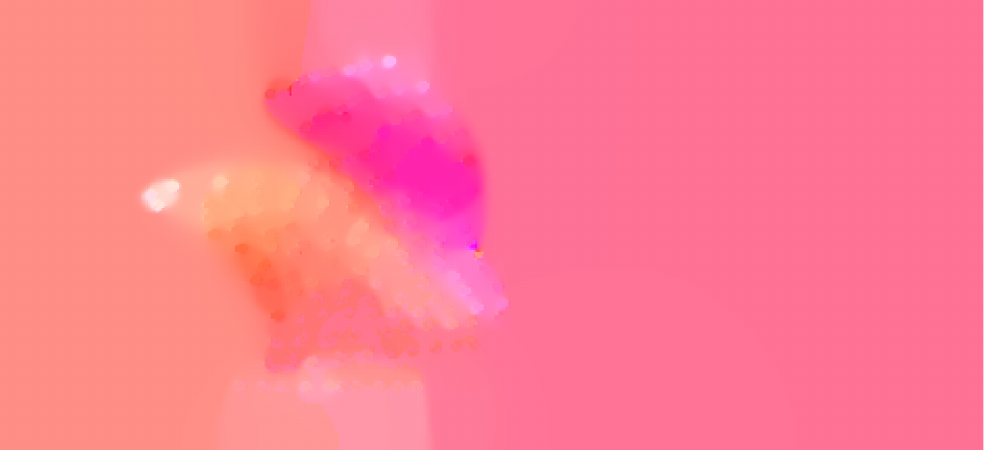}}
			
			\subfigure{
				\includegraphics[width=0.95\columnwidth]{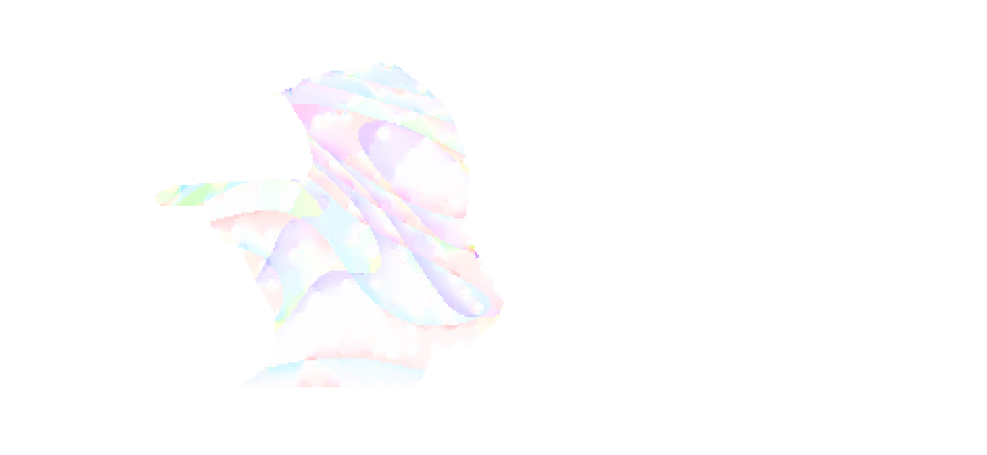}}
		\end{minipage}
		\begin{minipage}{0.22\textwidth}
			
			\subfigure{
				\includegraphics[width=0.95\columnwidth]{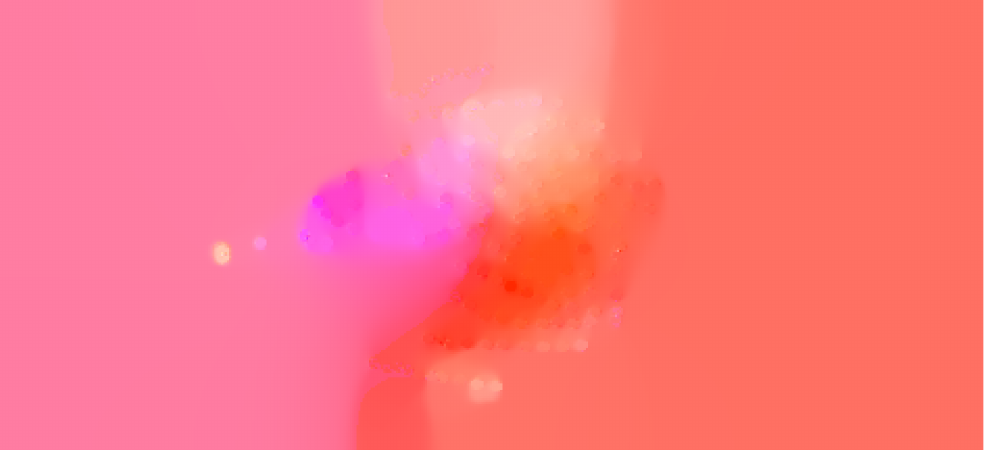}}
			
			\subfigure{
				\includegraphics[width=0.95\columnwidth]{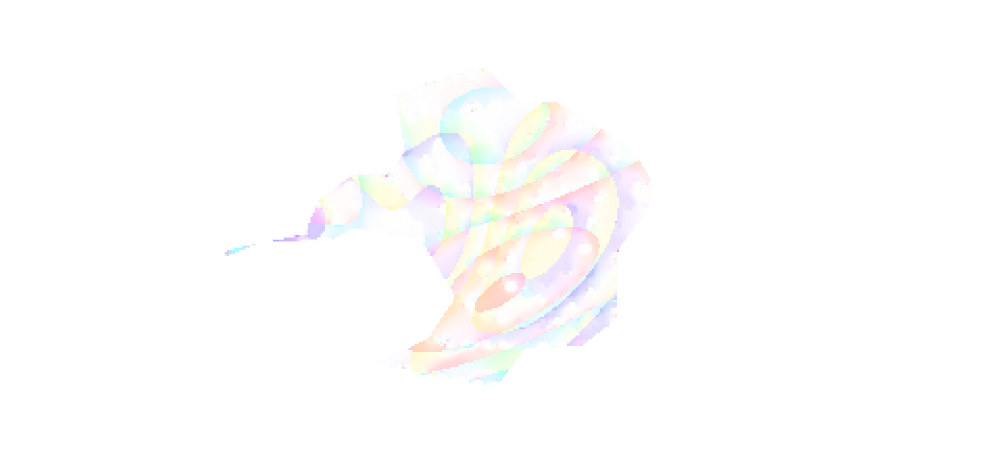}}
		\end{minipage}
		\begin{minipage}{0.22\textwidth}
			\subfigure{
				\includegraphics[width=0.95\columnwidth]{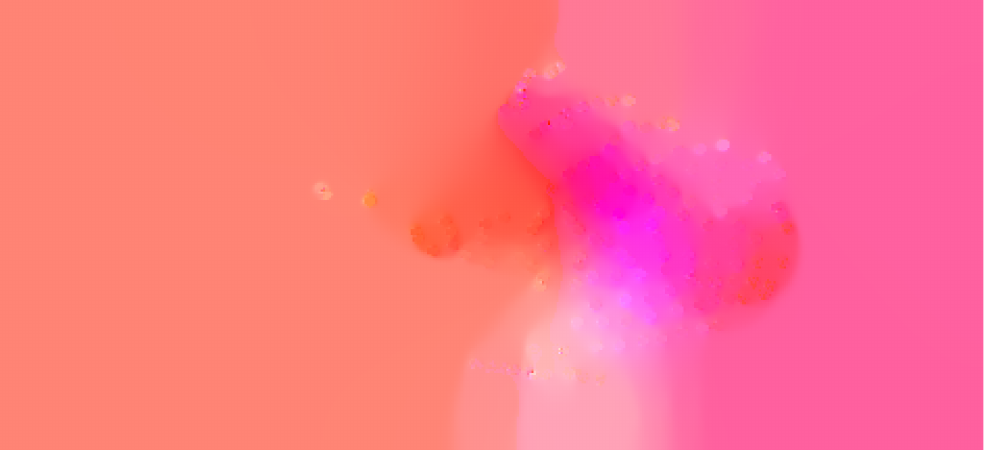}}
			
			\subfigure{
				\includegraphics[width=0.95\columnwidth]{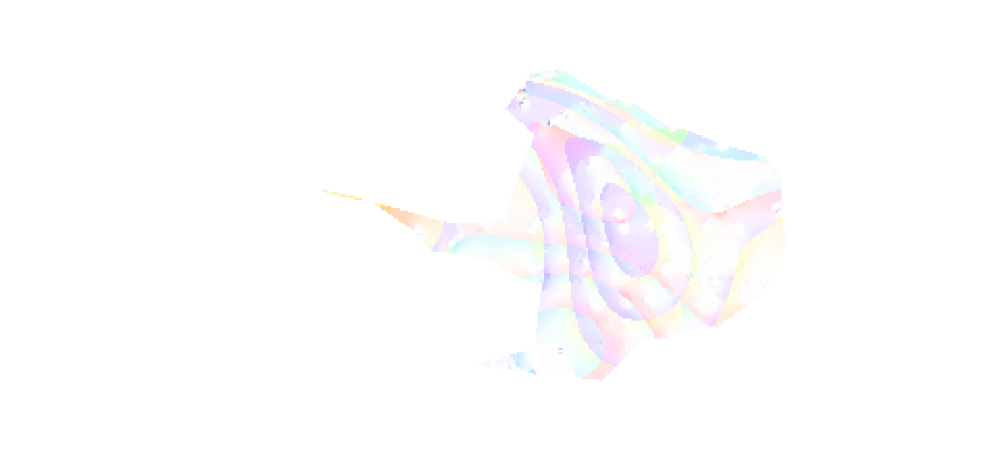}}
		\end{minipage}
		\begin{minipage}{0.05\textwidth}
			
			\vspace{60pt}
			\subfigure{
				\includegraphics[width=0.8\columnwidth]{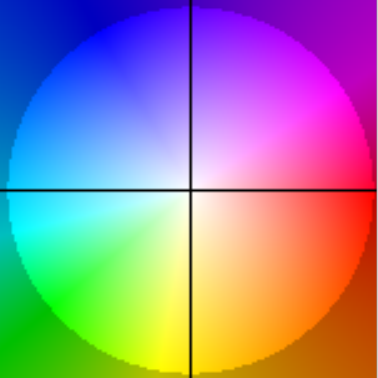}}
		\end{minipage}
		
		
		\caption{Inversion of the von K\'{a}rm\'{a}n vortex street. The figures in the top row show reconstructions of the flow field by applying the optical flow method. The figures in the bottom row are the subtractions between the true images and calculated images in the domain containing particles. The hue codes the direction, and the saturation codes the length.}
		\label{invWholeKarman}	
	\end{figure}
	\subsection{Comparison with the virtual ADCP}
	In this part, we compare the relative error of spatial averaging of the velocity between the virtual ADCP (V-ADCP) and our method using the von K\'{a}rm\'{a}n vortex street.
	The angle between the beams and the vertical is 20\textdegree~and each V-ADCP cell is defined as a volume delimited by a frustum of 4\textdegree ~cone.
	Similar to real ADCP, the spatial average velocity is calculated on each beam cell. Then the average of these velocities on all cells is regarded as the measurement of the V-ADCP in two dimensions. Assume that the V-ADCP measures the averaged velocity of the flow field precisely, which can be computed by the averaging formula using the numerical values of the flow. For our model, we place 450 particles of 0.01 m diameter in the von K\'{a}rm\'{a}n vortex street to cover the region of interest. Subsequently, the calculated results at a certain moment from the V-ADCP and our approach are compared with the exact spatial averaging of the flow velocity. The relative errors between the computed results and the exact averaged velocity are respectively 2.60\% and 10.05\%, which shows that the measurements from our novel model are more accurate than the conventional measurement techniques.
	
	\section{Conclusions\label{sec:conclusion}}
	In this work, we propose a novel inverse source problem formulation for the flow measurement. To the best of our knowledge, it is the first formulation based on inverse source problems. 
	We review the measurement theory of several instruments designed for flow measurement. The Doppler shift effect is one of the fundamental theories. 
	Employing the fact that the phenomenon of ultrasonic waves propagating in the water is governed by the wave equation, we take advantage of the inverse source problems to describe the measurement processes. In both theoretical and numerical terms, our novel formulation shows superior reconstruction accuracy. Moreover, only a few iterative steps are needed for a highly accurate recovery, thanks to the well-conditioned property of our model.
	
	We must emphasize that our novel model can be applied to various scenarios. 
	In addition to the examples shown in the paper, more experiments can be explored to investigate the effect of different parameter choices in our model, such as various types of the emitted source wave and non-constant sound speed. 
	Furthermore, our model has the capacity to design and simulate the performances of various instruments.
	
	
	
	Future work will also focus on simulating this model in three dimensions numerically and applying this model to practical terms, which means calculating the whole velocity field of flow from practical observations. Moreover, it is well-known that the flow slightly influences the speed of sound propagating in the water. Thus the coefficient $c$ in the wave equation would depend on position $x$ and time $t$. Although this is a minor change, the formulation will be more accurate when considering the inverse source problems for anisotropic wave equations. Developing an efficient algorithm and simulating the model based on anisotropic wave equations will be crucial follow-up works to the current method. Further theoretical analysis of the uniqueness and stability of the inverse source problem with anisotropic coefficients are also expected. 
	
	
	\bibliographystyle{plain}
	\bibliography{references}

\begin{thebibliography}{10}

\bibitem{ammariInverseSourceProblem2002}
{\sc H.~Ammari, G.~Bao, and J.~L. Fleming}, {\em An {{Inverse Source Problem}}
  for {{Maxwell}}'s {{Equations}} in {{Magnetoencephalography}}}, SIAM J. Appl.
  Math., 62 (2002), pp.~1369--1382.

\bibitem{andrleInverseSourceProblem2015}
{\sc M.~Andrle and A.~El~Badia}, {\em On an inverse source problem for the heat
  equation. {{Application}} to a pollution detection problem, {{II}}}, Inverse
  Problems in Science and Engineering, 23 (2015), pp.~389--412.

\bibitem{baoInverseSourceProblems2018}
{\sc G.~Bao, G.~Hu, Y.~Kian, and T.~Yin}, {\em Inverse source problems in
  elastodynamics}, Inverse Problems, 34 (2018), p.~045009.

\bibitem{baoInverseSourceProblem2011}
{\sc G.~Bao, J.~Lin, and F.~Triki}, {\em An inverse source problem with
  multiple frequency data}, Comptes Rendus Mathematique, 349 (2011),
  pp.~855--859.

\bibitem{beraFlowMeasurementTechnique2012}
{\sc S.~C. Bera and H.~Mandal}, {\em A {{Flow Measurement Technique Using}} a
  {{Noncontact Capacitance-Type Orifice Transducer}} for a {{Conducting
  Liquid}}}, IEEE Transactions on Instrumentation and Measurement, 61 (2012),
  pp.~2553--2559.

\bibitem{bestFlowDynamicsRiver1987}
{\sc J.~L. Best}, {\em Flow {{Dynamics}} at {{River Channel Confluences}}:
  {{Implications}} for {{Sediment Transport}} and {{Bed Morphology}}}, in
  Recent {{Developments}} in {{Fluvial Sedimentology}}, F.~G. Ethridge, R.~M.
  Flores, and M.~D. Harvey, eds., vol.~39, {SEPM Society for Sedimentary
  Geology}, Jan. 1987, p.~0.

\bibitem{blincoTurbulenceCharacteristicsFree1971b}
{\sc P.~H. Blinco and E.~Partheniades}, {\em Turbulence {{Characteristics In
  Free Surface Flows Over Smooth And Rough Boundaries}}}, Journal of Hydraulic
  Research, 9 (1971), pp.~43--71.

\bibitem{broxHighAccuracyOptical2004}
{\sc T.~Brox, A.~Bruhn, N.~Papenberg, and J.~Weickert}, {\em High {{Accuracy
  Optical Flow Estimation Based}} on a {{Theory}} for {{Warping}}}, in Computer
  {{Vision}} - {{ECCV}} 2004, T.~Pajdla and J.~Matas, eds., Lecture {{Notes}}
  in {{Computer Science}}, {Berlin, Heidelberg}, 2004, {Springer}, pp.~25--36.

\bibitem{bruhnLucasKanadeMeets2005}
{\sc A.~Bruhn, J.~Weickert, and C.~Schn{\"o}rr}, {\em Lucas/{{Kanade Meets
  Horn}}/{{Schunck}}: {{Combining Local}} and {{Global Optic Flow Methods}}},
  International Journal of Computer Vision, 61 (2005), pp.~211--231.

\bibitem{burgholzerExactApproximativeImaging2007}
{\sc P.~Burgholzer, G.~J. Matt, M.~Haltmeier, and G.~Paltauf}, {\em Exact and
  approximative imaging methods for photoacoustic tomography using an arbitrary
  detection surface}, Phys. Rev. E, 75 (2007), p.~046706.

\bibitem{cheneyFundamentalsRadarImaging2009}
{\sc M.~Cheney and B.~Borden}, {\em Fundamentals of {{Radar Imaging}}},
  {{CBMS-NSF Regional Conference Series}} in {{Applied Mathematics}}, {Society
  for Industrial and Applied Mathematics}, Jan. 2009.

\bibitem{chengIncreasingStabilityInverse2016}
{\sc J.~Cheng, V.~Isakov, and S.~Lu}, {\em Increasing stability in the inverse
  source problem with many frequencies}, Journal of Differential Equations, 5
  (2016), pp.~4786--4804.

\bibitem{dehoopUniquenessSeismicInverse2016a}
{\sc M.~V. {de Hoop}, L.~Oksanen, and J.~Tittelfitz}, {\em Uniqueness for a
  seismic inverse source problem modeling a subsonic rupture}, Communications
  in Partial Differential Equations, 41 (2016), pp.~1895--1917.

\bibitem{dehoopLocalAnalysisInverse2012}
{\sc M.~V. {de Hoop}, L.~Qiu, and O.~Scherzer}, {\em Local analysis of inverse
  problems: {{H\"older}} stability and iterative reconstruction}, Inverse
  Problems, 28 (2012), p.~045001.

\bibitem{dehoopAnalysisMultilevelProjected2015a}
{\sc M.~V. {de Hoop}, L.~Qiu, and O.~Scherzer}, {\em An analysis of a
  multi-level projected steepest descent iteration for nonlinear inverse
  problems in {{Banach}} spaces subject to stability constraints}, Numer.
  Math., 129 (2015), pp.~127--148.

\bibitem{durgeshNoiseCorrectionTurbulent2014a}
{\sc V.~Durgesh, J.~Thomson, M.~C. Richmond, and B.~L. Polagye}, {\em Noise
  correction of turbulent spectra obtained from acoustic doppler velocimeters},
  Flow Measurement and Instrumentation, 37 (2014), pp.~29--41.

\bibitem{elgarCurrentMeterPerformance2001}
{\sc S.~Elgar, B.~Raubenheimer, and R.~T. Guza}, {\em Current {{Meter
  Performance}} in the {{Surf Zone}}}, Journal of Atmospheric and Oceanic
  Technology, 18 (2001), pp.~1735--1746.

\bibitem{ellerMicrolocallyAccurateSolution2020}
{\sc M.~Eller, P.~Hoskins, and L.~Kunyansky}, {\em Microlocally accurate
  solution of the inverse source problem of thermoacoustic tomography}, Inverse
  Problems, 36 (2020), p.~085012.

\bibitem{evansPartialDifferentialEquations2010}
{\sc L.~C. Evans}, {\em Partial {{Differential Equations}}}, {American
  Mathematical Soc.}, 2010.

\bibitem{hristovaReconstructionTimeReversal2008}
{\sc Y.~Hristova, P.~Kuchment, and L.~Nguyen}, {\em Reconstruction and time
  reversal in thermoacoustic tomography in acoustically homogeneous and
  inhomogeneous media}, Inverse Problems, 24 (2008), p.~055006.

\bibitem{isakovInverseSourceProblems1990}
{\sc V.~Isakov}, {\em Inverse {{Source Problems}}}, {American Mathematical
  Soc.}, 1990.

\bibitem{isakovUniquenessInverseConductivity2007}
{\sc V.~Isakov}, {\em On uniqueness in the inverse conductivity problem with
  local data}, Inverse Problems and Imaging, 1 (2007), p.~95.

\bibitem{karmanAerodynamics1963}
{\sc T.~V. Karman}, {\em Aerodynamics}, {McGraw-Hill Inc.,US}, Dec. 1963.

\bibitem{kirkebyStabilityInverseSource2020}
{\sc A.~Kirkeby, M.~T.~R. Henriksen, and M.~Karamehmedovi{\'c}}, {\em Stability
  of the inverse source problem for the {{Helmholtz}} equation in {{R3}}},
  Inverse Problems, 36 (2020), p.~055007.

\bibitem{klingelReviewWaterBalance2015}
{\sc P.~Klingel and A.~Knobloch}, {\em A {{Review}} of {{Water Balance
  Application}} in {{Water Supply}}}, Journal AWWA, 107 (2015), pp.~E339--E350.

\bibitem{kuchmentMathematicsPhotoacousticThermoacoustic2015}
{\sc P.~Kuchment and L.~Kunyansky}, {\em Mathematics of {{Photoacoustic}} and
  {{Thermoacoustic Tomography}}}, in Handbook of {{Mathematical Methods}} in
  {{Imaging}}, O.~Scherzer, ed., {Springer}, {New York, NY}, 2015,
  pp.~1117--1167.

\bibitem{lattPalabosParallelLattice2021}
{\sc J.~Latt, O.~Malaspinas, D.~Kontaxakis, A.~Parmigiani, D.~Lagrava,
  F.~Brogi, M.~B. Belgacem, Y.~Thorimbert, S.~Leclaire, S.~Li, F.~Marson,
  J.~Lemus, C.~Kotsalos, R.~Conradin, C.~Coreixas, R.~Petkantchin, F.~Raynaud,
  J.~Beny, and B.~Chopard}, {\em Palabos: {{Parallel Lattice Boltzmann
  Solver}}}, Computers \& Mathematics with Applications, 81 (2021),
  pp.~334--350.

\bibitem{liThroughWallDetectionHuman2012}
{\sc J.~Li, Z.~Zeng, J.~Sun, and F.~Liu}, {\em Through-{{Wall Detection}} of
  {{Human Being}}'s {{Movement}} by {{UWB Radar}}}, IEEE Geoscience and Remote
  Sensing Letters, 9 (2012), pp.~1079--1083.

\bibitem{liIncreasingStabilityInverse2017}
{\sc P.~Li and G.~Yuan}, {\em Increasing stability for the inverse source
  scattering problem with multi-frequencies}, Inverse Problems and Imaging, 11
  (2017), p.~745.

\bibitem{linTheoreticalNumericalStudies2022}
{\sc G.~Lin, Z.~Zhang, and Z.~Zhang}, {\em Theoretical and numerical studies of
  inverse source problem for the linear parabolic equation with sparse boundary
  measurements}, Inverse Problems,  (2022).

\bibitem{liptakInstrumentEngineersHandbook2003c}
{\sc B.~G. Liptak}, ed., {\em Instrument {{Engineers}}' {{Handbook}}, {{Volume
  One}}: {{Process Measurement}} and {{Analysis}}}, {CRC Press}, {Boca Raton},
  fourth~ed., June 2003.

\bibitem{liuPixelsExploringNew2009}
{\sc C.~Liu}, {\em Beyond Pixels : Exploring New Representations and
  Applications for Motion Analysis}, thesis, Massachusetts Institute of
  Technology, 2009.

\bibitem{lohrmannDirectMeasurementsReynolds1995}
{\sc A.~Lohrmann, R.~Cabrera, G.~Gelfenbaum, and J.~Haines}, {\em Direct
  measurements of {{Reynolds}} stress with an acoustic {{Doppler}}
  velocimeter}, in Proceedings of the {{IEEE Fifth Working Conference}} on
  {{Current Measurement}}, Feb. 1995, pp.~205--210.

\bibitem{lohrmannAcousticDopplerVelocimeterADV1994}
{\sc A.~Lohrmann, R.~Cabrera, and N.~C. Kraus}, {\em Acoustic-{{Doppler
  Velocimeter}} ({{ADV}}) for {{Laboratory Use}}}, in Fundamentals and
  {{Advancements}} in {{Hydraulic Measurements}} and {{Experimentation}},
  {ASCE}, 1994, pp.~351--365.

\bibitem{mittalIterativelyRegularizedLandweber2021}
{\sc G.~Mittal and A.~K. Giri}, {\em Iteratively regularized {{Landweber}}
  iteration method: {{Convergence}} analysis via {{H\"older}} stability},
  Applied Mathematics and Computation, 392 (2021), p.~125744.

\bibitem{mittalConvergenceRatesIteratively2022}
{\sc G.~Mittal and A.~K. Giri}, {\em Convergence rates for iteratively
  regularized {{Gauss}}\textendash{{Newton}} method subject to stability
  constraints}, Journal of Computational and Applied Mathematics, 400 (2022),
  p.~113744.

\bibitem{nakagawaPredictionContributionsReynolds1977}
{\sc H.~Nakagawa and I.~Nezu}, {\em Prediction of the contributions to the
  {{Reynolds}} stress from bursting events in open-channel flows}, Journal of
  Fluid Mechanics, 80 (1977), pp.~99--128.

\bibitem{rennieDeconvolutionTechniqueSeparate2007}
{\sc C.~D. Rennie and R.~G. Millar}, {\em Deconvolution {{Technique}} to
  {{Separate Signal}} from {{Noise}} in {{Gravel Bedload Velocity Data}}},
  Journal of Hydraulic Engineering, 133 (2007), pp.~845--856.

\bibitem{sellarHighresolutionVelocimetryEnergetic2015}
{\sc B.~Sellar, S.~Harding, and M.~Richmond}, {\em High-resolution velocimetry
  in energetic tidal currents using a convergent-beam acoustic {{Doppler}}
  profiler}, Meas. Sci. Technol., 26 (2015), p.~085801.

\bibitem{shademaniActiveRegulationOnDemand2017}
{\sc A.~Shademani, H.~Zhang, J.~K. Jackson, and M.~Chiao}, {\em Active
  {{Regulation}} of {{On-Demand Drug Delivery}} by {{Magnetically Triggerable
  Microspouters}}}, Advanced Functional Materials, 27 (2017), p.~1604558.

\bibitem{staceyObservationsTurbulencePartially1999}
{\sc M.~T. Stacey, S.~G. Monismith, and J.~R. Burau}, {\em Observations of
  {{Turbulence}} in a {{Partially Stratified Estuary}}}, Journal of Physical
  Oceanography, 29 (1999), pp.~1950--1970.

\bibitem{stoneEvaluatingVelocityMeasurement2007}
{\sc M.~C. Stone and R.~H. Hotchkiss}, {\em Evaluating velocity measurement
  techniques in shallow streams}, Journal of Hydraulic Research, 45 (2007),
  pp.~752--762.

\bibitem{taylorMechanismProductionSmall1937}
{\sc G.~I. Taylor and A.~E. Green}, {\em Mechanism of the production of small
  eddies from large ones}, Proceedings of the Royal Society of London. Series A
  - Mathematical and Physical Sciences, 158 (1937), pp.~499--521.

\bibitem{termaatFluidFlowMeasurements1970}
{\sc K.~P. Termaat}, {\em Fluid flow measurements inside the reactor vessel of
  the 50 {{MWe Dodewaard}} nuclear power plant by cross-correlation of
  thermocouple signals}, J. Phys. E: Sci. Instrum., 3 (1970), pp.~589--593.

\bibitem{tokyayInvestigationTwoElemental2009}
{\sc T.~Tokyay, G.~Constantinescu, and J.~A. {Gonzalez-Castro}}, {\em
  Investigation of {{Two Elemental Error Sources}} in {{Boat-Mounted Acoustic
  Doppler Current Profiler Measurements}} by {{Large Eddy Simulations}}},
  Journal of Hydraulic Engineering, 135 (2009), pp.~875--887.

\bibitem{tongAnalysisFlowDistribution2021}
{\sc L.~L. Tong, L.~Q. Hou, and X.~W. Cao}, {\em Analysis of the flow
  distribution and mixing characteristics in the reactor pressure vessel},
  Nuclear Engineering and Technology, 53 (2021), pp.~93--102.

\bibitem{treebyKWaveMATLABToolbox2010}
{\sc B.~E. Treeby and B.~T. Cox}, {\em K-{{Wave}}: {{MATLAB}} toolbox for the
  simulation and reconstruction of photoacoustic wave fields}, JBO, 15 (2010),
  p.~021314.

\bibitem{tudosieAircraftGasTurbineEngine2011}
{\sc A.-N. Tudosie and A.-N. Tudosie}, {\em Aircraft {{Gas-Turbine Engine}}'s
  {{Control Based}} on the {{Fuel Injection Control}}}, {IntechOpen}, Sept.
  2011.

\bibitem{wangConvergenceAnalysisInexact2018}
{\sc Y.~Wang, J.~Li, and J.~Wang}, {\em A convergence analysis of an inexact
  {{Newton-Landweber}} iteration method for nonlinear problem}, Applicable
  Analysis, 97 (2018), pp.~1106--1116.

\bibitem{whippleMeasurementsReynoldsStress2006}
{\sc A.~C. Whipple, R.~A. Luettich, and H.~E. Seim}, {\em Measurements of
  {{Reynolds}} stress in a wind-driven lagoonal estuary}, Ocean Dynamics, 56
  (2006), pp.~169--185.

\bibitem{zangerlFullFieldInversion2019}
{\sc G.~Zangerl, M.~Haltmeier, L.~V. Nguyen, and R.~Nuster}, {\em Full {{Field
  Inversion}} in {{Photoacoustic Tomography}} with {{Variable Sound Speed}}},
  Applied Sciences, 9 (2019), p.~1563.

\end{thebibliography}
	
\end{document}